\def\input@path{{/Users/andrewpowell/Downloads/}}
\providecommand{\tabularnewline}{\\}
\numberwithin{equation}{section}
\numberwithin{figure}{section}
\theoremstyle{plain}
\newtheorem{thm}{\protect\theoremname}
\theoremstyle{definition}
\newtheorem{defn}[thm]{\protect\definitionname}
\theoremstyle{remark}
\newtheorem{rem}[thm]{\protect\remarkname}
\theoremstyle{plain}
\newtheorem{lem}[thm]{\protect\lemmaname}
\theoremstyle{plain}
\newtheorem{cor}[thm]{\protect\corollaryname}
\providecommand{\corollaryname}{Corollary}
\providecommand{\definitionname}{Definition}
\providecommand{\lemmaname}{Lemma}
\providecommand{\remarkname}{Remark}
\providecommand{\theoremname}{Theorem}
\begin{document}
\title{A Universal Hypercomputer}
\author{Andrew Powell}
\address{Dr. Andrew Powell, Honorary Senior Research Fellow, Institute for
Security Science and Technology, Level 2 Admin Office Central Library,
Imperial College London, South Kensington Campus, London SW7 2AZ,
United Kingdom, }
\email{andrew.powell@imperial.ac.uk}
\begin{abstract}
This paper describes a type of infinitary computer (a hypercomputer)
capable of computing truth in the initial levels of the set theoretic
universe, \emph{V}. The proper class of such hypercomputers is called
a universal hypercomputer. There are two basic variants of hypercomputer:
a serial hypercomputer and a parallel hypercomputer. The set of computable
functions of the two variants is identical but the parallel hypercomputer
is in general faster than a serial hypercomputer (as measured by an
ordinal complexity measure). Insights into set theory using information
theory and a universal hypercomputer are possible, and it is argued
that the Generalised Continuum Hypothesis can be regarded as a information-theoretic
principle, which follows from an information minimization principle. 
\end{abstract}

\keywords{Generalized Continuum Hypothesis, Hypercomputation, Information Theory,
Kolmogorov Complexity, Set Theory}
\maketitle

\section{Introduction}

This paper introduces the notion of a universal hypercomputer and
shows that all sets in the Von Neumann hierarchy of pure sets can
be computed by a universal hypercomputer, and computation theory with
sufficient resources can be regarded as a recasting of set theory.
The significance of this equivalence is that there are likely to be
natural computational analogues in set theory. An example is given
of the Generalized Continuum Hypothesis, which is shown to be an information-theoretic
principle. and which follows from an information minimization principle
(see section \ref{sec:An-Information-Minimization}).\\
\\
According to B. J. Copeland \cite{Copeland2002} ``{[}a{]} hypercomputer
is any information-processing machine, notional or real, that is able
to achieve more than the traditional human clerk working by rote.''
Hypercomputers are a controversial topic (see \cite{Davis2003}, \cite{Davis2006})
because by definition they exceed what a human (or a computer) could
compute by rote with finite resources in a finite time. And certainly
it is not at all clear that you could physically build any kind of
hypercomputer (see \cite{Teuscher2002}). For example, an important
class of hypercomputer allows a computer to run forever and converge
to an output, and to start again with outputs taken to be inputs.
This type of hypercomputer (``a infinite run time hypercomputer'')
requires a countably infinite sequence of computation steps, which
humans cannot complete. Likewise a human could not load the input
registers of a hypercomputer which allows arbitrary real numbers as
input, because a human cannot load the uncountably infinitely many
bits in an arbitrary real number and, even if a real number could
be replaced by a finite label, there are uncountably many real numbers
(see \cite{BlumShubSmale89} and for a more recent survey see \cite{Ziegler2007}).
\\
\\
Having noted the impracticability of hypercomputers, by way of contrast
it is worth highlighting the long standing and rich literature of
(meta-)mathematical results describing the computational power of
different types of hypercomputer, starting with A. M. Turing's oracle
machines (see \cite{Copeland2002,Cleland2004,Ord2006}). Fundamentally
infinitary models of hypercomputers can provide strong intuitions
and sometimes result in simplifications of proofs and shortening of
the length of those proofs. An example, taken from the subject of
proof theory (see \cite{Rathjen2006} for this and other examples),
is that K Schütte's proof of the consistency of first-order Peano
arithmetic is much shorter than G. Gentzen's original (broadly finitary)\footnote{The principle of transfinite induction up to the countable ordinal
$\epsilon_{0}$ is less finitary than primitive recursive functions
but still corresponds to a definite progression in the complexity
of the (concrete) proof figures.} proof because Schütte introduced two natural inference rules with
an infinite number of premisses (usually countably infinitely many),
collectively known as the $\omega$-rule, which is generally the inference
from $S\vdash P(c)$ for all constant symbols $c$ to $S\vdash(\forall x)P(x)$,
and the dual inference from $S\vdash(\exists x)P(x)$ to $S\vdash P(c)$
for some constant symbol $c$). It is also worth mentioning that computational
power is related to the proof power of a deductive axiom system because
a (total) function $f$ is computable if $(\forall x)(\exists y)(f(x)=y)$
is provable in some deductive axiom system. It would seem to follow
that a hypercomputer can prove more than a Turing machine could; and
indeed, this is true. A hypercomputer which allowed countably infinitely
many registers of a computer to be non-empty and allowed a state to
require countably infinitely many register values to match a condition
would be able to implement the $\omega$-rule and to decide the truth
or falsehood of every proposition in first-order Peano arithmetic.
The difficulty is that the $\omega$-rule and the computer states
that correspond to it are in principle not human computable when any
model of a deductive system has an infinite domain (such as the set
of the natural numbers). But of course that does nothing to undermine
the truth of the  result that first-order Peano arithmetic plus the
$\omega$-rule is complete for the language of first-order arithmetic.
\\
\\
In a similar vein a number of important results are known about hypercomputers.
We will cite two such results. The first result (from \cite{HamkLew2000})
is that an infinite run time hypercomputer is complete for first-order
arithmetical truth and can decide the truth of all $\Pi_{1}^{1}$
propositions (\emph{i.e.} propositions of the form $(\forall X)P(X)$,
where formula $P$ may contain bounded variables over the natural
numbers but the variable $X$ over sets of natural numbers remains
free) and decide the membership of sets of natural numbers that are
defined by a $\Pi_{1}^{1}$ formula with a free natural number variable.\footnote{It is possible for a $\Pi_{1}^{1}$ formula to have free variables
over sets of natural numbers (indeed that is an essential part of
the language of second order arithmetic), but in terms of a predicative
concept of set, one starts with sets of natural numbers definable
by arithmetical formulas and then defines sets of natural numbers
inductively by relativizing quantifiers over sets of natural numbers
of arbitrary formulas of second order arithmetic to the sets already
defined and iterates this construction to the first non-recursive
ordinal (see \cite{Feferman1964}). The resulting set of sets of natural
numbers are the hyperarithmetical sets of natural numbers.} The significance of this result is that $\Pi_{1}^{1}$ propositions
and sets are impredicative\footnote{A set of natural numbers defined by an impredicative formula is a
set of natural numbers defined by a formula that quantifies over all
sets of natural numbers.}, and by a classic result due to S.C. Kleene and C. Spector (see \cite{Feferman1964},
\cite{Sacks1990}) the sets of natural numbers (or equivalently real
numbers) defined by a $\Delta_{1}^{1}$ formula with a free natural
number variable\footnote{A $\Delta_{1}^{1}$ formula can be expressed in the form $\Pi_{1}^{1}$
and $(\exists X)Q(X)$, where formula $Q$ may contain bounded variables
over the natural numbers but the variable $X$ over sets of natural
numbers remains free.} can be identified with sets of natural numbers computable by a transfinite
sequence of oracle machines up to the first non-recursive ordinal,
starting from a universal Turing machine and adding a function which
computes the halting problem of the previous oracle machines in the
sequence (see \cite{HamkLew2000} and compare the infinite time register
machine defined in \cite{Koepke2006a}).\footnote{In fact an infinite run time hypercomputer can decide propositions
which extend up the analytical hierarchy and can be defined by a $\Delta_{2}^{1}$
formula, see \cite{HamkLew2000} Theorem 2.5.} The fact that $\Delta_{1}^{1}\subset\Pi_{1}^{1}$ shows just how
powerful a hypercomputer must be to decide the truth of all $\Pi_{1}^{1}$
propositions or membership of $\Pi_{1}^{1}$ sets. It is also worth
mentioning that hyperarithmetical sets have been generalised by R.
Shore, G. Sacks \emph{et al} to set theory by means of $\alpha$-recursion
theory\footnote{$\alpha$ is an ordinal such that cumulative $L_{\alpha}$ of Gödel's
constructive universe of sets is a model of Kripke-Platek set theory.}. P. Koepke and B. Seyfferth \cite{KoepSeyf2008} have shown that
hypercomputers with $\alpha$ registers and up to $\alpha$ steps
in a computation with a finite program can compute $\alpha$-recursive
and $\alpha$-recursively enumerable sets, and can be used to prove
results in $\alpha$-recursion theory computationally. A second, even
stronger result (from P. Koepke, see \cite{Koepke2005,Koepke2006,Koepke2009})
is that a hypercomputer that has a finite program, but has an infinite
number of registers and an infinite run time that can have any infinite
ordinal value, can compute all constructible sets (in the sense of
K. Gõdel's constructible universe of sets, see \cite{Kunen1980} for
a clear introduction) of ordinals from finitely many ordinal parameters.
This result shows that ordinal constructibility (or better definability
in terms of previously defined sets) is the same as a general notion
of ordinal computability with a finite program. \\
\\
Now although the literature has considered Turing machines/register
machines with infinite run time (which always terminate after countably
many steps) and Turing machines/register machines with infinite run
time and infinite memory indexed by the class of all ordinals (known
as o\emph{rdinal computers}), there has been no exploration to date
of Turing machines with infinite run time, infinite memory and programs
with an infinite number of instructions. This paper proves the result
that the set of hypercomputable sets with finitely many ordinal parameters
(specifying the hypercomputer configuration) is the Von Neumann hierarchy
of pure sets. \\
\\
In many ways this result is fairly obvious: unconstrained computation
resources lead to every set being computable. But it also leads to
the thought that computational notions are likely to have natural
set theory analogues. If we define the number of bits of information
in a set $x$ (expressed as a binary sequence that represents all
the members of $x$ as well as $x$) as the least length of the sequence
which can be losslessly compressed from $x$, then we can see that
the number of bits of information in a binary sequence of length $\alpha$
is $\le\alpha$. In fact the amount of information in a set is a cardinal
number, $\aleph$, because any sequence of length $\aleph\le\alpha<\aleph+1$
can be losslessly compressed by being mapped one-to-one and onto a
sequence of length $\aleph$ by definition of cardinal number. It
is shown in Theorem \ref{thm:GCH-is-equivalent} that the Generalized
Continuum Hypothesis (GCH) states that the amount of information needed
to decide the relation $x\in X$ by enumeration\footnote{The enumeration is an interleaved enumeration of $X$ and $2^{\aleph}-X$.}
of $X\subseteq2^{\aleph}$ and $2^{\aleph}-X$ is $<\aleph+1$, where
$x$ is expressed as a binary sequence of length at most cardinal
$\aleph\ge\aleph_{0}$ and $2^{\aleph}$ is the set of all such binary
sequences. Of the standard principles of Zermelo Fraenkel set theory,
GCH is the only principle that can be cast in an explicitly information-theoretic
way, but the Axiom of Separation and the Axiom Schema of Replacement
limit the information in a set by limiting its size, and the Axiom
of Foundation ensures that a set has a bounded amount of information
(because every membership chain must terminate after finitely many
steps). \\
\\
There is a view that second-order Zermelo Fraenkel set theory and
the universal hypercomputer that computes its unique class model\footnote{There is of course a hierarchy of set models of second-order Zermelo
Fraenkel set theory defined by taking the set theoretic universe,
$V$, up to the level of each uncountable strongly inaccessible cardinal,
see \cite{Hellman89} for an interesting discussion of a modal-structural
view of set theory. }, $V$, are too powerful to be useful in mathematics. It is shown
in Corollary \ref{cor:GCH} below that a universal hypercomputer computes
GCH as true in $V$ if an information minimization principle is true,
by exploiting the link between $V$ and the universal hypercomputer,
\emph{i.e.} that $V$ is ``the class of'' the universal hypercomputer
and the universal hypercomputer is ``the computer of'' $V$. The
information minimization principle states that, for losslessly incompressible
sets, to any hypercomputation that decides $x\in X$ by enumeration
of $X$ and its complement there corresponds a hypercomputation that
decides $x\in X$ by enumeration of $X$ and its complement that has
the length of the minimum number of bits of information in $x\in X$
and $x\notin X$. This information minimization principle is an expression
of the fact that all sets and all membership relations can be hypercomputed
and that a set and a relation contain a certain number of bits of
information, and it does not matter how  those bits are enumerated,
as some enumeration of this number of bits will define the set and
decide the truth of the relation for particular sets. One strong assumption
in this argument is that all hypercomputations can be performed in
the universe of associated sets (which can be mapped one-to-one and
onto $V$, see Theorem \ref{thm:minimum-info}). We can also say that
it is assumed that $2^{\aleph}$ exists and that a corresponding $\langle2^{\aleph},2^{\aleph},2^{\aleph}\rangle$-hypercomputer
exists. These assumptions are equivalent to the existence and uniqueness
of $V$. It is of course possible to identify a set $X\subseteq2^{\aleph}$
by means of a particular formula or predicate in $>\aleph$ bits if
a quantified variable in the formula ranges over say $\subseteq2^{2^{\aleph}}$,
but the set itself in $V$ does not change and is still $\subseteq2^{\aleph}$.
\\
\\
We could in fact define a set $X$ of cardinality $\le2^{\aleph}$
as a set of sets $x$ that can be defined in $\le\aleph$ bits by
enumeration such that the membership relation between $x$ and $X$
(see Theorem \ref{thm:minimum-info}) can also be decided in $\le\aleph$
bits by enumeration. The basic argument for GCH is that GCH is equivalent
to the statement that $x\in X\subseteq2^{\aleph}$ can be decided
by enumeration almost always in $<\aleph+1$ steps for infinite cardinal
$\aleph$ (see Theorem \ref{thm:GCH-is-equivalent}) and yet this
statement is equivalent to the claim that the number of bits of information
in the relation $x\in X$ is $\aleph$. 

\section{What is a Universal Hypercomputer?}

So far we have not defined a universal hypercomputer. We start with
a hypercomputer that can compute truth in the initial levels $\le\alpha$
of von Neumann cumulative hierarchy of pure, well founded-sets, \emph{V}.
The proper class of all such hypercomputers can compute the truth
of all propositions in \emph{V} and forms \emph{a universal hypercomputer}.
This type of hypercomputer permits programs of infinite ordinal length,
infinitely many registers and computations of infinite length, which
is possible if the registers are left in a consistent state at limit
ordinals during computations. In the following definitions we split
out the number of registers, the length of computations and and the
length of the program as separate parameters.
\begin{defn}
A \emph{$\langle\aleph,\beth,\daleth\rangle$-hypercomputer}, for
cardinals $\aleph$ and $\beth$ and ordinal $\daleth$, where $\beth\leq\daleth\leq\aleph$,
comprises the following elements:
\end{defn}

\begin{itemize}
\item \emph{$\aleph$-many Registers} for storage of inputs, outputs and
workings of a computation. For ease of exposition\footnote{Separate input, working and output registers are not essential, as
registers can always be moved around and working space created, but
I hope their use makes the exposition easier to follow.} there will be disjoint sets of registers for inputs, outputs and
workings. Input registers are read-only and contain inputs in the
hypercomputer's initial state. Working registers are read-write and
receive a copy of the inputs when the program starts. Output registers
receive a copy of the content of the working registers, are write-only
by the program and contain the outputs of the program in the hypercomputer's
halting state (see below). A register consists of an ordinal identifier
and a data field, written $R_{\alpha}$ for $\alpha<\aleph$, which
can contain 0 or 1. By default all registers are initialized with
the value 0 (representing ``empty''). Input registers will be written
$I_{\alpha}$, working registers $W_{\alpha}$, and output registers
$O_{\alpha}$. It is convenient to allow multiple disjoint sets of
working registers, $W_{\beta,\alpha}$, to facilitate operations on
data set,\footnote{Disjoint sets of registers can be reproduced by coding the set of
disjoint sequences $\{\langle a_{1,\alpha},a_{2,\alpha}\cdots,a_{i<\aleph,\alpha},\cdots\rangle:\:\alpha<\aleph\}$
by the concatenation $\langle a_{1,1},a_{2,1},\cdots,a_{i<\aleph,1},\cdots\rangle\langle0,1\rangle\langle a_{1,2},a_{2,2},\cdots,a_{i<\aleph,2},\cdots\rangle\langle0,1\rangle\cdots\langle0,1\rangle\langle a_{1,\alpha},a_{2,\alpha},\cdots,a_{i<\aleph,\alpha},\linebreak\cdots\rangle\cdots$
with the marker $\langle0,1\rangle$ placed after each successor and
limit member of the sequence and having rules to skip over markers. } and it will be assumed in this paper that working registers are partitioned
into disjoint sets.\footnote{\label{fn:9}For example, take a program which has two states (other
than the standard special states), 1 and 2, the standard introduction
and conclusion for input and output being ignored for simplicity.
In state 1 if the program reads a register $W_{1,\alpha}$ containing
a 1, it writes a 1 in register $W_{2,1}$ and stays in state 1. In
state 1 if the program reads a register $W_{1,\alpha}$ containing
a 1, it moves right to $W_{1,\alpha+1}$ and stays in state 1, while
if $W_{1,\alpha}$ contains a 0 it writes a 0 in register $W_{2,1}$
and terminates by moving to the halting state, 2. When reading registers
$W_{1,\lambda}$ with limit ordinal $\lambda$, the program will be
in the highest state achieved (\emph{i.e.} 1 in practice) when reading
registers $W_{1,\alpha<\lambda}$ and the value of any register $W_{\beta,\alpha\leq\lambda}$
after limit ordinal $\lambda$ steps of the program will be the value
of an eventually constant sequence $W_{\beta,\alpha}$ for $<\lambda$
steps or 1 otherwise. It can be seen that $W_{2,1}$ contains 1 if
and only if every $W_{1,\alpha}$ for ordinal $\alpha<\aleph$ contains
1. The program implements infinite logical conjunction (\emph{i.e.
}infinite logical ``and'') of propositions with truth values stored
in $W_{1,\alpha}$. This program can be written formally as follows
in the notation of this paper: $\langle1,W_{1},1,\langle2,2\rangle,1\rangle$,
$\langle1,W_{1},1,\langle8,1\rangle,1\rangle$, $\langle1,W_{1},0,\langle1,2\rangle,2\rangle,\langle1,W_{0},0,\langle12,0\rangle,1\rangle,\langle1,W_{0},1,\langle12,0\rangle,1\rangle,\langle1,W_{0},1,0,2\rangle$.
The last three instructions implement the flag set to 1 in $W_{0,0}$
when the program completes, and moves the program to the halt state.
Infinite ``or'' can be done similarly with the two state machine:
in state 1, if the program reads a register $W_{1,\alpha}$ containing
a 0 it writes a 0 to $W_{2,1}$, moves to $W_{1,\alpha+1}$ and stays
in state 1; if it reads a register $W_{1,\alpha}$ containing a 1
it writes a 1 to $W_{2,1}$ and moves to halting state 2.} To avoid complexities associated with the computability of functions
that jump between registers, registers perform like infinite linear
tapes of length $\aleph$ terminated on the left, with $R_{1}$ being
the register with lowest ordinal and only registers $R_{\alpha+1}$
and $R_{\alpha-1}$, where they exist, being accessible from $R_{\alpha}$.
$W_{0,0}$ is treated as a special register as it is set to\emph{
0} by default and set to \emph{1} if a program (or subprogram) runs
to completion, after $o(\aleph)$ steps, where $o(\aleph)$ is the
least ordinal of cardinality $\aleph$. This register can be used
as a ``flag'' to capture the output of the program.
\item \emph{Symbols} 0 and 1.
\item \emph{$\beth$}-many\emph{ States} which determine which action the
hypercomputer takes and any output it produces. A state can be identified
by an ordinal. There are at least two special states, an \emph{initial
state, }identified by the ordinal \emph{0,} where a program (see below)
starts and a \emph{halting state} where a program stops. The hypercomputer
enters the halting state, \emph{i.e.} stops, when none of the instructions
(see below) applies, or when the computation length is reached (when
the contents of $W_{0,0}$ are set to 1). Ordinary states are like
line numbers in a hypercomputer program (see \cite{Koepke2005}),
so from the initial state the program will enter the first ordinary
state, \emph{1} say, and as the number of instructions executed (\emph{i.e.}
the length of the computation) increases towards limit ordinal $\alpha$,
the program jumps to state $\alpha$ unless there is a state with
a smaller least upper bound.\footnote{It is of course possible to become stuck in a particular state and
for the program not to output given a particular set of register values,
but equally it is possible to loop back to the same state if the register
value is \emph{0} say, and then at the next limit ordinal for the
program to read a \emph{1}, when the program may move to a different
state.} It makes sense not to be able to jump past a limit ordinal, so for
successor ordinal state $\alpha$ only states with ordinal $prevlim(\alpha)\leq\beta<nextlim(\alpha)$
are accessible from $\alpha$, where $prevlim(\alpha)$ is the preceding
limit ordinal $\leq\alpha$ and $nextlim(\alpha)$ is the next limit
ordinal $>\alpha$. 
\item An\emph{ initial configuration}, comprising data loaded into the input
registers, an initial state and an initial current register ($I_{1}$
by default and likewise $W_{\beta,1}$ and $O_{1}$ when these sets
of registers are accessed).
\item A \emph{program} of length $\beth$\footnote{The instructions can be grouped by state into a table of instructions.
For ease of exposition, the program length will refer to the number
of state entries in the table.} which is a (in general transfinite) sequence of 5-tuples \emph{$\langle$Current
State, Register Set, Symbol, Action, Next State}$\rangle$, called
\emph{program instructions}, read as ``if the hypercomputer is in
Current State and the current register in the Register Set contains
Symbol then do Action and move into Next State'', where an Action
may be to do nothing, write a 0 or 1 to a current register, $R_{\alpha}$,
in any set of registers, to move left or right where possible, \emph{i.e.}
from $R_{\alpha}$ to $R_{\alpha-1}$ or $R_{\alpha+1}$ if $\alpha$
is a successor ordinal and from $R_{\alpha}$ to $R_{\alpha+1}$ otherwise,
or set the current register to the 0-th register,\emph{ i.e.} $R_{0}$.
As these operations apply to each disjoint set of registers, \emph{I},
\emph{$W_{\beta}$}, \emph{O}, there are 11 instruction types (as
``do nothing'' applies to all registers and \emph{I} cannot be written
to). For definiteness, ``do nothing'' can be represented by 0, ``write
a 0'' to the current register of $W_{\beta}$ by $\langle1,\beta\rangle$,
``write a 0'' to the current register of \emph{O} by $5$, ``write
a 1'' to the current register of $W_{\beta}$ by $\langle2,\beta\rangle$,
``write a 1'' to the current register of \emph{O} by $6$, ``move
left'' by 3 (for \emph{I}), $\langle7,\beta\rangle$ (for \emph{$W_{\beta}$})
and 9 (for \emph{O}), ``move right'' by 4 (for \emph{I}), $\langle8,\beta\rangle$
(for \emph{$W_{\beta}$}) and 10 (for \emph{O}), and ``reset register''
by 11 (for \emph{I}), $\langle12,\beta\rangle$ (for \emph{$W_{\beta}$})
and 13 (for \emph{O}). Each program comprises a standard introduction
which copies the input registers to working registers (\emph{i.e.}
a set of 5-tuples with source set of registers $I$ and destination
set of registers $W_{1}$),\footnote{A program to copy the registers from $I$ and destination $W_{1}$
has one ordinary state, 1, and comprises the instructions $\langle0,I,0,0,1\rangle$,
$\langle0,I,1,0,1\rangle$,$\langle1,I,0,\langle1,1\rangle,1\rangle$,
$\langle1,I,1,\langle2,1\rangle,1\rangle$, $\langle1,I,0,4,1\rangle$,
$\langle1,I,1,4,1\rangle$, $\langle1,I,0,\langle8,1\rangle,1\rangle$
$\langle1,I,1,\langle8,1\rangle,1\rangle$. The sequence $\langle1,W_{0},0,\langle12,0\rangle,1\rangle,\langle1,W_{0},1,\langle12,0\rangle,1\rangle,\langle1,W_{0},1,0,2\rangle$
will move the program to the halting state, 2, when it completes copying.} a program that manipulates the working registers, and a standard
conclusion which copies working registers to output registers (\emph{i.e.}
a set of 5-tuples with source registers $W_{\beta}$ and destination
registers $O$).\footnote{\label{fn:11}A program to copy the registers from $W_{\beta}$ and
destination registers \emph{O} has one ordinary state, 1, and comprises
the instructions $\langle0,W_{\beta},0,0,1\rangle$, $\langle0,W_{\beta},1,0,1\rangle$,
$\langle1,W_{\beta},0,5,1\rangle$, $\langle1,W_{\beta},1,6,1\rangle$,
$\langle1,W_{\beta},0,\langle8,\beta\rangle,1\rangle$, $\langle1,W_{\beta},1,\langle8,\beta\rangle,1\rangle$,
$\langle1,W_{\beta},0,9,1\rangle$, $\langle1,W_{\beta},1,9,1\rangle$.
The sequence $\langle1,W_{0},0,\langle12,0\rangle,1\rangle,\langle1,W_{0},1,\langle12,0\rangle,1\rangle,\langle1,W_{0},1,0,2\rangle$
will move the program to the halting state, 2, when it completes copying.} It is not possible for humans to write down infinitely long programs,
but it is possible to write \emph{program schemas}. An example is
a program schema for the logical conjunction of a set a registers
of cardinality $\aleph$ given by a finite program in footnote \ref{fn:9}
could be written $\langle1,W_{1},1,\langle2,2\rangle,1\rangle$, $\langle\alpha,W_{1},1,\langle8,1\rangle,\alpha+1\rangle$,
$\langle\alpha,W_{1},0,\langle1,2\rangle,o(\aleph)\rangle$, where
$\alpha<o(\aleph)$ is an ordinal parameter for the state and $o(\aleph)$
is the halt state.\footnote{Program schemas are concise, but finite programs suffice in the theorems
below except for writing data input and output, where most data will
need to be hard coded because there are only countably many program
schemas if each $\alpha$has no be defined by a finite formula.}
\item $\daleth$ many steps in the computation (see Definition \ref{def:2.2}). 
\item \emph{Output} is the contents of the output registers when the program
is in a halting state. 
\end{itemize}
A hypercomputer will read a program, which will start in the initial
state, run through its computation and terminate when it reaches a
halting state. The output of the program is the contents of the hypercomputer's
output registers.
\begin{defn}
\label{def:2.2}A \emph{computation} is a sequence of steps of length
$\daleth$ that results in output given specific input. 
\end{defn}

To make this characterisation precise, a computation can be considered
to take place in discrete time intervals indexed by ordinals. Following
\cite{Koepke2005} a ``step'' can be taken to have three components:
the current state at time $\alpha$, written $S_{\alpha}(R)$, a pointer
to the ordinal index of the current register, $H_{\alpha}(R)$, and
the contents of all the registers (a ``snapshot'' of the computation),
$C_{\alpha}(R):\daleth\rightarrow\{0,1\}$, where \emph{R} is a set
of registers \emph{I}, \emph{$W_{\beta}$}, \emph{O}. Limit ordinal
``steps'' are special, as the principle (see \cite{HamkLew2000})
will be adopted that if $S_{\alpha}(R)$, $H_{\alpha}(R)$ or $C_{\alpha}(R)$
are eventually constant for $\alpha<\lambda$, where $\lambda$ is
a limit ordinal, then by default $S_{\lambda}(R)$, $H_{\lambda}(R)$
or $C_{\lambda}(R)(\zeta)$ for $\zeta<\aleph$ will take those constant
values or else will take the limit of the least upper bounds, which
will be $C_{\lambda}(R)$($\zeta)=1$ if $C_{\alpha<\lambda}(R)$($\zeta)$
is not eventually constant and $S_{\lambda}(R)=\lambda$ and $H_{\lambda}(R)=R_{\lambda}$
if $S_{\lambda}(R)$ and $H_{\lambda}(R)$ are otherwise unbounded.
This is the ``lim sup'' construction (\emph{i.e}. the limit of the
least upper bounds). Recursive definitions for $S_{\alpha}(R)$, $H_{\alpha}(R)$
and $C_{\alpha}(R)$ are given as follows (again based on \cite{Koepke2005})\footnote{Koepke uses ``lim inf'' rather than ``lim sup'' because the programs
he considers are finite, and it makes no sense to jump to an infinite
limit ordinal state.}.\\

If $\langle\beta,R,b,a,\gamma\rangle$ is the instruction such that
$S_{\alpha}(R)=\beta$ and $C_{\alpha}(R)(H_{\alpha})=b$ then:
\begin{itemize}
\item $S_{0}(R)=0$
\item $S_{\alpha+1}(R)=\gamma$ where $prevlim(\alpha)\leq\gamma<nextlim(\alpha)$
\item $H_{0}(R)=0$
\item $H_{\alpha+1}(I)=H_{\alpha}(I)-1$ if $a=3$ and $H_{\alpha}(I)$
is a successor ordinal
\item $H_{\alpha+1}(W_{\beta})=H_{\alpha}(W_{\beta})-1$ if $a=\langle7,\beta\rangle$
and $H_{\alpha}(W_{\beta})$ is a successor ordinal
\item $H_{\alpha+1}(O)=H_{\alpha}(O)-1$ if $a=9$ and $H_{\alpha}(O)$
is a successor ordinal
\item $H_{\alpha+1}(I)=H_{\alpha}(I)+1$ if $a=4$
\item $H_{\alpha+1}(W_{\beta})=H_{\alpha}(W_{\beta})+1$ if $a=\langle8,\beta\rangle$ 
\item $H_{\alpha+1}(O)=H_{\alpha}(O)+1$ if $a=10$
\item $H_{\alpha+1}(I)=0$ if $a=11$
\item $H_{\alpha+1}(W_{\beta})=0$ if $a=\langle12,\beta\rangle$ 
\item $H_{\alpha+1}(O)=0$ if $a=13$
\item $H_{\alpha+1}(R)=H_{\alpha}(R)$ otherwise
\item $C_{0}(I)(\zeta)=I_{\zeta}$ for all $\zeta<\aleph$
\item $C_{\alpha+1}(W_{\beta})(\zeta)=0$ if $a=\langle1,\beta\rangle$
and $\zeta=H_{\alpha}(W_{\beta})$
\item $C_{\alpha+1}(O)(\zeta)=0$ if $a=5$ and $\zeta=H_{\alpha}(O)$
\item $C_{\alpha+1}(W_{\beta})(\zeta)=1$ if $a=\langle2,\beta\rangle$
and $\zeta=H_{\alpha}(W_{\beta})$
\item $C_{\alpha+1}(O)(\zeta)=1$ if $a=6$ and $\zeta=H_{\alpha}(O)$
\item $C_{\alpha+1}(\zeta)=C_{\alpha}(\zeta)$ otherwise for all $\zeta<\aleph$
\item $S_{\lambda}(R)=\lim\:\sup_{\alpha\rightarrow\lambda}S_{\alpha}(R)$
if $\lambda$ is a limit ordinal 
\item $H_{\lambda}(R)=\lim\:\sup_{\alpha\rightarrow\lambda}H_{\alpha}(R)$
if $\lambda$ is a limit ordinal
\item $C_{\lambda}(R)(\zeta)=\lim\:\sup_{\alpha\rightarrow\lambda}C_{\alpha}(R)(\zeta)$
if $\lambda$ is a limit ordinal
\end{itemize}
\begin{defn}

A serial $\langle\aleph,\beth,\daleth\rangle$-hypercomputer, for
cardinals $\aleph$, $\daleth$ and $\beth$ where $\beth\leq\daleth\leq\aleph$,
is a hypercomputer in which there are $\aleph$ many input, working
and output registers which each can store 0 or 1 and which supports
programs with $\beth$ states, with $\beth$ instructions (5-tuples),
and which supports a maximum of $o(\daleth)$ steps, where $o(\Gamma)$
is the least ordinal of cardinality $\Gamma$.
\end{defn}

\begin{defn}
A \emph{Turing machine} (see \cite{Tur1936}) is a $\langle<\aleph_{0},<\aleph_{0},<\aleph_{0}\rangle$-hyper\-computer
as it has $\aleph_{0}$ many registers but with only finitely many
registers addressed in the program, and each program having finitely
many states and instructions. Although a finite program may not halt,
a function is usually considered computable if there are \emph{< }$\aleph_{0}$
steps.
\end{defn}

\begin{defn}
\label{def:2.4}A \emph{parallel $\langle\aleph,\beth,\daleth\rangle$-hypercomputer},
for cardinals $\aleph$, $\daleth$ and $\beth$ where $\beth\leq\daleth\leq\aleph$,
is a\emph{ }hypercomputer that can store data in the registers and
process data from the registers in parallel. For the purposes of this
paper, such a parallel hypercomputer will comprise $\aleph-$many
serial \emph{$\langle\aleph,\beth,\daleth\rangle$}-hypercomputers
running independently in step but with the ability to use common read-only
input registers and the capability of writing outputs to a set of
registers through a second management program.\footnote{The general case is where the \emph{$\langle\aleph,\beth,\daleth\rangle$}-hypercomputers
are not independent of one another, but even in the general case the
dependency can be made explicit by taking the output of a parallel
\emph{$\langle\aleph,\beth,\daleth\rangle$}-hypercomputer as an input
to a serial \emph{$\langle\aleph,\beth,\daleth\rangle$}-hypercomputer
or to another parallel \emph{$\langle\aleph,\beth,\daleth\rangle$}-hypercomputer.} To be precise, there are $\aleph$ sets of registers $\{\langle I_{\alpha,\gamma},W_{\beta,\alpha,\gamma},O_{\alpha,\gamma}\rangle\}$,
 where $\gamma<\aleph$ is an index of the set of registers and in
fact an index of the overall parallel program, and the working and
output registers are disjoint, \emph{i.e.} $\bigcup_{\beta<\aleph,\alpha<\aleph}W_{\beta,\alpha,\gamma}\cap\bigcup_{\beta<\aleph,\alpha<\aleph}W_{\beta,\alpha,\delta}=\slashed{O}$
if $\gamma\neq\delta$ and $O_{\alpha,\gamma}\neq O_{\alpha,\delta}$
if $\gamma\neq\delta$. For each $\langle I_{\alpha,\gamma},W_{\beta<\aleph,\alpha,\gamma},O_{\alpha,\gamma}\rangle$
there is a program, $P_{\gamma}$, of length $\beth$ which runs disjoint
computations based on input registers $I_{\alpha,\gamma}$ for $\le\daleth$
steps and produces any output in $O_{\alpha,\gamma}$ for $\alpha<\aleph$.\footnote{Instructions in a parallel hypercomputer have the form \emph{$\langle Index\,of\,Serial\:hypercomputer,Current\,State,Current\,Set\,of\,Registers,\,Symbol,Action,\linebreak NextState\rangle$},
so that a program to copy input registers\emph{ $I_{\alpha,\gamma}$}
to working register\emph{ $W_{1,\alpha,\gamma}$ }(without the sequence
to move the program into the halting state) is\emph{ }$\langle\gamma,1,I,0,\langle1,1\rangle,1\rangle$,
$\langle\gamma,1,I,1,\langle2,1\rangle,1\rangle$, $\langle\gamma,1,I,0,4,1\rangle$,
$\langle\gamma,1,I,1,4,1\rangle$, $\langle\gamma,1,I,0,\langle8,1\rangle,1\rangle$,
where $\alpha$ is the current register in the input registers and
in the set\emph{ $W_{1,\gamma}$} in $\gamma-$th hypercomputer in
the parallel set. } There may be a separate management program \emph{M(Q)} that copies
the contents of all registers $O_{\alpha,\gamma}$ to the registers
in the initial state of a separate parallel\emph{ $\langle\aleph,\beth,\daleth\rangle$}-hypercomputer
and then runs a given program \emph{Q} ($=P_{\gamma}$), that in the
halting state contains the output of \emph{Q} (if any)\emph{.} For
ease of computation, it is assumed that parallel hypercomputers can
be chained, the output from one parallel hypercomputer being the input
to other parallel hypercomputers, and such a chain of hypercomputers
is also a parallel hypercomputer.\footnote{Allowing chains of parallel programs does not change the set of computable
functions, but can be useful in practice.}
\end{defn}

\begin{rem}
For infinite $\aleph$ a parallel \emph{$\langle\aleph,\beth,\daleth\rangle$-}hypercomputer
computes the same functions as a serial $\langle\aleph,\beth\times\aleph,\daleth\times\aleph\rangle$-hypercomputer,
\emph{i.e.} as a serial \emph{$\langle\aleph,\aleph,\aleph\rangle$}-hypercomputer,
as can be seen by noting that \emph{$\aleph$} computations can be
interleaved rather than being performed in parallel. For the same
reason, a parallel \emph{$\langle\aleph,\beth,\daleth\rangle$}-hypercomputer
is \emph{$\leq\aleph$} faster than a serial \emph{$\langle\aleph,\beth,\daleth\rangle$}-hypercomputer. 
\end{rem}

\section{Losslessly Compressed Sets}

It was mentioned in section 1 that it is possible to identify the
number of bits of information in a set $x$ (expressed as a binary
sequence that represents members of $x$ as well as $x$) as the least
length of the sequence which can be losslessly compressed from $x$.
The question arises how we express sets as binary sequences. While
it is possible to concatenate binary sequences representing members
of a set $X$ to represent $X$ as a binary sequence, here we will
fix an enumeration of $X\subseteq2^{\aleph}$, $\langle x_{\alpha}:\alpha<2^{\aleph}\rangle$
(which exists by the Axiom of Choice), and for any subset $Y\subseteq X$
form the binary \emph{$\aleph$-}sequence $\langle b_{\alpha}:(y_{\alpha}\in Y\rightarrow b_{\alpha}=1)\vee(y_{\alpha}\notin Y\rightarrow b_{\alpha}=0)\rangle$,
where the ordinal index of any member $y\in Y$ is taken from the
enumeration of $X$ (which includes all members of $Y$). This approach
has the advantages that all binary $2^{\aleph}$-sequences are represented,
and some sets where membership is easily decided are clearly compressible.
For example, $2^{\aleph}$ is represented as a $2^{\aleph}$-sequence
of 1s, while the empty set is represented as a $2^{\aleph}$-sequence
of 0s. Moreover, the representation of $2^{\aleph}-Y$ is formed from
the representation of $Y$ by swapping 0s for 1s. and \emph{vice versa}
\\
\\
A binary $\aleph$-sequence is \emph{losslessly compressible} if it
has an initial binary $<\aleph$-sequence followed by a terminal binary
$\aleph$-sequence which comprises $\aleph$ many repetitions of binary
$<\aleph$-sequences, and is \emph{losslessly incompressible }otherwise.
To see that this is a reasonable definition, note that it is possible
to create an $\aleph$-sequence by concatenating together with repetitions
a set of $<\aleph$-sequences of cardinality $\le\aleph$. If the
$\aleph$-sequence that results has period $<\aleph$, then the $\aleph$-sequence
can be treated as $\aleph$ many repetitions of binary $<\aleph$-sequences,
while if it has period $\aleph$ then it cannot be represented by
a $<\aleph$-sequence and thus is losslessly incompressible because,
if a set does not change its cardinality on being losslessly compressed,
it is treated as losslessly incompressible. The idea of a losslessly
compressible $\aleph$-sequence is that the sequence can be replaced
by a binary code for an initial $<\aleph$-sequence, a binary code
for the repeated pattern and a separate binary code for the number
of repetitions. The code for $\aleph$ repetitions can be set to 0
and for any other number of repetitions $r<\aleph$  the code can
be set to the cardinal of the ordinal $o(r)+1$. \\
\\
There is a clear link between the notion of information defined above
and Kolmogorov complexity (see \cite{Vitanyi97} for example). Recall
that Kolmogorov complexity of a set $X$ is the least length of a
computer program in a defined formal programming language which outputs
$X$. But while Kolmogorov complexity is a powerful and well-researched
approach to algorithmic complexity and to the study of randomness,
in this paper the focus will be on binary $\aleph$-sequences that
do not comprise $\aleph$ many repetitions of binary $<\aleph$-sequences
rather than sets which can be generated by a computable formula. The
primary reason for this choice is that the compressibility of a sequence
should only depend on patterns in the sequence and the sequence length
and not on a representation in a formal programming language\footnote{Using a universal Turing machine it can be shown that choice of programming
language imposes a constant overhead in terms of program length when
the program language is changed, see \cite{Fortnow2004}.}. Another difference with the approach of Kolmogorov complexity is
that Kolmogorov complexity minimizes program length, while here the
emphasis is on minimizing the number of steps in the computation of
a (serial hyper-)computer from a blank tape (or empty registers),
compressing the input data, running the program and decompressing
the output as necessary.\footnote{For decision problems no decompression is needed.}
It will also turn out that the program length is equal to the number
of steps in the information minimization principle below. In addition,
just like in Kolmogorov complexity, we make use of losslessly incompressible
binary sequences as a useful tool in proofs. To that end, we show
that there are sufficient losslessly incompressible sets of every
infinite cardinality. 
\begin{lem}
\label{lem:incompress}For every infinite cardinal $\aleph$, almost
all sets of cardinality $\aleph$ are not losslessly compressible
to sets of smaller cardinality.
\end{lem}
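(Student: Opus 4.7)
The plan is a counting argument. Any losslessly compressible binary $\aleph$-sequence is, by the definition given in the excerpt, determined by a pair $(s_{0},s_{1})$ of binary strings of lengths $\gamma<\aleph$ and $\delta<\aleph$ respectively, the full sequence being obtained by concatenating $s_{0}$ with $\aleph$-many copies of $s_{1}$. The natural map sending such a pair to the corresponding $\aleph$-sequence is therefore a surjection onto the set of compressible $\aleph$-sequences, and I would use this map to transfer a cardinality bound from the parameter set to the compressible sequences themselves.

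Next I would bound the parameter set. The collection of binary $<\aleph$-sequences has cardinality at most $\kappa:=\sup_{\mu<\aleph}2^{\mu}$, since it is a union of $\aleph$-many sets of sizes $2^{\mu}$ for $\mu<\aleph$, and $\aleph\leq\kappa$. Hence the compressible $\aleph$-sequences number at most $\kappa\cdot\kappa=\kappa$. Because there are $2^{\aleph}$ binary $\aleph$-sequences in total, as soon as $\kappa<2^{\aleph}$ the incompressible sequences constitute a set of cardinality exactly $2^{\aleph}$, yielding the ``almost all'' conclusion in the cardinality sense (the compressible sequences form a subset of strictly smaller cardinality than the whole of $2^{\aleph}$).

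The main obstacle is the inequality $\kappa<2^{\aleph}$. It is immediate when $\aleph$ is a strong-limit cardinal, because then $2^{\mu}<\aleph$ for all $\mu<\aleph$, whence $\kappa=\aleph<2^{\aleph}$ by Cantor's theorem; and it also follows uniformly from GCH, which the paper subsequently motivates in section \ref{sec:An-Information-Minimization}. In ZFC alone, however, $\kappa=2^{\aleph}$ is consistent at certain successor cardinals of singular cofinality, so strictly speaking the lemma either relies on one of these additional cardinal-arithmetic assumptions on $\aleph$ or the phrase ``almost all'' is to be read in the weaker sense of ``all but a set bounded in cardinality by $\kappa$''. The counting bookkeeping in the first two paragraphs is entirely routine; the substantive content and the only real delicacy sit in this final cardinal comparison.
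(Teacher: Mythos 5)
Your argument is the same counting strategy the paper itself uses, but run uniformly: you bound the compressible $\aleph$-sequences by $\kappa\cdot\kappa$ where $\kappa=2^{<\aleph}=\sup_{\mu<\aleph}2^{\mu}$, whereas the paper splits into three cases ($\aleph_{0}$, successor, limit) and computes the same bound in each; your bookkeeping is correct and the unified version is cleaner. The delicacy you isolate --- whether $2^{<\aleph}<2^{\aleph}$ --- is genuinely the crux, and you are right that it is not a theorem of ZFC: for a successor $\aleph=\lambda^{+}$ it amounts to $2^{\lambda}<2^{\lambda^{+}}$, which can fail (it is consistent that $2^{\aleph_{0}}=2^{\aleph_{1}}$), and for a limit cardinal one can likewise have $2^{<\aleph}=2^{\aleph}$ (it is consistent that $2^{\aleph_{n}}=2^{\aleph_{\omega}}=\aleph_{\omega+1}$ for all $n$). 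Your phrase ``successor cardinals of singular cofinality'' is garbled --- successor cardinals are regular --- but the counterexamples you are gesturing at do exist. What you could not see is that the paper does not resolve this point either: its successor case simply asserts the cardinal subtraction $2^{\beth}-2^{\aleph}=2^{\beth}$ without justifying $2^{\aleph}<2^{\beth}$, and its limit case invokes K\"onig's theorem with $j_{i}=2^{i}$ and $k_{i}=2^{\beth}$, which presupposes $2^{i}<2^{\beth}$ for every $i<\beth$ --- exactly the inequality at issue, and one that fails in the model just mentioned. So your proposal matches the paper's proof in substance, improves its organization, and correctly locates the one step that requires an additional cardinal-arithmetic hypothesis (strong limit, GCH, or a weakened reading of ``almost all'') rather than passing over it; the only caveat is that, as you acknowledge, neither your argument nor the paper's establishes the lemma for every infinite cardinal in ZFC alone.
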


\begin{proof}
Firstly we recall that the number of bits in a set is always a cardinal
number. Proceed by an argument by cases on the cardinality of the
set: the infinite countable cardinal, $\aleph_{0}$; the infinite
successor cardinal case; and the infinite limit cardinal case. \\
\\
We first prove that almost all sets of cardinality $\aleph_{0}$ are
losslessly incompressible. We can note that there are $2^{\aleph_{0}}$
possible binary $\omega$-sequences, while there are only $\le\aleph_{0}$
$\omega$-sequences with a finite initial binary sequence and an independently
chosen terminal binary $\omega$-sequence comprising a repeated finite
binary sequence (since $(\sum_{Y\subseteq X,\left|Y\right|<\aleph_{0}}\left|Y\right|)\times(\sum_{Z\subseteq X,\left|Z\right|<\aleph_{0}}\left|Z\right|)=\aleph_{0}\times\aleph_{0}=\aleph_{0}$).
Hence almost all ($2^{\aleph_{0}}-\aleph_{0}=2^{\aleph_{0}}$) sets
of cardinality $\aleph_{0}$ (\emph{i.e}. sets expressible as a $\omega$-sequence)
are losslessly incompressible. \\
\\
When $\beth=\aleph+1$ is an infinite successor cardinal, then by
a counting argument there are $2^{\beth}$ binary $\beth$-sequences,
while there are $2^{\aleph}$ losslessly compressible $\beth$-sequences.
The latter can be shown by noting that there are $2^{\aleph}$ patterns
of length $\le\aleph$ in any terminal $\beth$-sequence and $2^{\aleph}$
initial binary $\aleph$-sequences, which are independent of one another,
\emph{i.e.} $2^{\aleph}\times2^{\aleph}=2^{\aleph}$ in total. Hence
almost all ($2^{\beth}-2^{\aleph}=2^{\beth})$ sets of cardinality
$\beth$ \emph{(i.e}. sets expressible as a $\beth$-sequence) are
losslessly incompressible.\\
\\
When $\beth$ is an infinite limit cardinal, by a counting argument
there are $2^{\beth}$ possible binary $\beth$-sequences, while there
are $\sum_{\alpha<\beth}2^{\alpha}$ losslessly compressible binary
$\beth$-sequences, where $\sum$ is the cardinal sum operator, because
by induction there are $2^{\aleph}$  losslessly compressible sets
for each infinite successor cardinal $\aleph+1$ (see the successor
cardinal case) and we can assume by hypothesis that there are $\sum_{\alpha<\gimel}2^{\alpha}$
losslessly compressible binary $\gimel$-sequences for limit cardinal
$\gimel<\beth$ . We can show that $\sum_{\alpha<\beth}2^{\alpha}<2^{\beth}$
by means of König's theorem. König's theorem states that $\sum_{i\in I}j_{i}<\prod_{i\in I}k_{i}$
for $I$ an index set, $j_{i}$ and $k_{i}$ are cardinals $j_{i}<k_{i}$,
and $\prod$ is the cardinal product function. If $I=\beth$, $j_{i}=2^{i}$
and $k_{i}=2^{\beth},$ then we have $\sum_{i<\beth}2^{i}<(2^{\beth})^{\beth}=2^{\beth}$
(see \cite{Jech2002} Theorem 5.16ii). Hence almost all ($2^{\beth}-\sum_{\alpha<\beth}2^{\alpha}=2^{\beth})$
sets of cardinality $\beth$ (\emph{i.e}. sets expressible as a $\beth$-sequence)
are losslessly incompressible. \\
\\
Since all three cases have been been established, the lemma follows.
\end{proof}

\section{The Generalised Continuum Hypothesis as an Information-Theoretic
Axiom}

In this section we prove a theorem that shows that GCH is an information-theoretic
axiom. First, however we define the notion of interleaved enumeration
for use in Theorem \ref{thm:GCH-is-equivalent}\emph{ et seq.} 
\begin{defn}
\label{def:An-interleaved-enumeration}An \emph{interleaved enumeration}
of two sets $U$ and $V$ is created by forming a new enumeration
$h$ from $f$ an enumeration function for $U$ and $g$ is an enumeration
function for $V$ as follows: $h_{\alpha}=f_{inf(\alpha)+fin(\alpha)/2}$
if ordinal $\alpha$ has a Cantor normal form\footnote{The Cantor normal form is a representation of any ordinal in the form
$\sum_{i=1}^{n<\omega}\omega^{b_{i}}\times c_{i}$ , where $c_{i}$
are positive integers and ordinals $b_{i}$ are such that $b_{i}>b_{j}$
and $b_{n}\ge0$ for $i<j$.} comprising an (possibly zero) infinite part $inf(\alpha),$ and an
even finite part $fin(\alpha)$ (including 0) and $h_{\alpha}=g_{inf(\alpha)+(fin(\alpha)+1)/2}$
if $fin(\alpha)$ is odd.
\end{defn}

\begin{thm}
\label{thm:GCH-is-equivalent}GCH is equivalent to\footnote{Strictly the inference from the information limitation principle to
GCH is probabilistic (true almost always) in cardinality terms rather
than logically necessary. } the assertion that the amount of information needed to decide the
relation $x\in X$ by an interleaved enumeration of $X$ and $2^{\aleph}-X$
is $<\aleph+1$, for any given binary $\aleph$-sequence x of length
at most cardinal $\aleph\ge\aleph_{0}$ and X has cardinality $\le2^{\aleph}$. 
\end{thm}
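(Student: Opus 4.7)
The plan is to prove the equivalence in two directions, reading $\aleph+1$ as the cardinal successor $\aleph^{+}$ (consistent with the paper's convention that $\aleph\le\alpha<\aleph+1$ characterises ordinals of cardinality $\aleph$). I identify the \emph{amount of information needed to decide} $x\in X$ by interleaved enumeration with the cardinality of the ordinal position $\alpha_{x}$ at which $x$ is first encountered in the interleaved enumeration (Definition \ref{def:An-interleaved-enumeration}) of $X$ and $2^{\aleph}-X$: reaching position $\alpha_{x}$ forces the serial hypercomputer to emit $|\alpha_{x}|$-many decision bits, and by Lemma \ref{lem:incompress} almost every $X\subseteq 2^{\aleph}$ has a losslessly incompressible characteristic sequence, so this cardinality cannot be reduced by compression.

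For the forward direction, assume GCH at $\aleph$, so $2^{\aleph}=\aleph^{+}$. The interleaved enumeration of $X$ and $2^{\aleph}-X$ covers all of $2^{\aleph}$ and therefore has length $o(\aleph^{+})$, the initial ordinal of cardinality $\aleph^{+}$. Every ordinal $\alpha<o(\aleph^{+})$ has cardinality $\le\aleph$, so for every $x\in 2^{\aleph}$ the position $\alpha_{x}$ satisfies $|\alpha_{x}|\le\aleph$, which is strictly less than $\aleph+1$, as required.

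For the reverse direction, Cantor's theorem already yields $2^{\aleph}\ge\aleph^{+}$, so I need only rule out $2^{\aleph}>\aleph^{+}$. Suppose for contradiction that $2^{\aleph}\ge\aleph^{++}$ and take $X=2^{\aleph}$, which satisfies $|X|\le 2^{\aleph}$. The interleaved enumeration has length $o(2^{\aleph})$; the positions lying in the interval $[o(\aleph^{+}),o(2^{\aleph}))$ all have cardinality $\ge\aleph^{+}$ and they number $2^{\aleph}$, dominating the $\aleph^{+}$ positions of cardinality $\le\aleph$. Hence for almost every $x\in 2^{\aleph}$ the information needed to decide $x\in X$ is $\ge\aleph+1$, which contradicts the hypothesis in the almost-always sense flagged by the footnote. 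Therefore $2^{\aleph}=\aleph^{+}$, and quantifying over infinite $\aleph$ gives GCH.

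The hard part will be justifying the opening identification, namely that the amount of information needed to decide $x\in X$ by interleaved enumeration genuinely equals $|\alpha_{x}|$ rather than some smaller quantity reached by a clever encoding. This is where Lemma \ref{lem:incompress} does the real work: for almost every $X$ the interleaved characteristic sequence is losslessly incompressible, so its true information content equals its length, and locating $x$ inside it cannot be cheaper than its ordinal position. Once this identification is in place, the equivalence with GCH reduces to the cardinal comparison between $o(\aleph^{+})$ and $o(2^{\aleph})$ carried out in the two middle paragraphs.
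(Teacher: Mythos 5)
Your forward direction is essentially the paper's: under GCH the interleaved enumeration of $X$ and $2^{\aleph}-X$ exhausts $2^{\aleph}=\aleph^{+}$, so every $x$ sits at a position of cardinality $\le\aleph$. The reverse direction, however, has a genuine gap in the choice of witness. You take $X=2^{\aleph}$, but this is the worst possible instance for your own key identification: its characteristic sequence is the constant $2^{\aleph}$-sequence of 1s, i.e.\ the \emph{most} losslessly compressible set in the paper's sense, and its complement is empty, so there is nothing to interleave and the ``amount of information needed to decide $x\in X$'' is trivially small (the answer is always yes). Lemma \ref{lem:incompress}, which you correctly flag as carrying the burden of equating information with ordinal position, gives you incompressibility only for \emph{almost all} $X$ and says nothing about this particular $X$; so the counting of positions in $[o(\aleph^{+}),o(2^{\aleph}))$ does not translate into a lower bound on information for $X=2^{\aleph}$, and no contradiction is obtained. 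The repair is available inside the paper itself: either take $X$ of an alleged intermediate cardinality $\aleph<c<2^{\aleph}$, as the paper's own proof does, or take $X$ with $|X|=|2^{\aleph}-X|=2^{\aleph}$ and both parts losslessly incompressible and entangled, as the paper constructs before Theorem \ref{thm:minimum-info}; with either choice your position-counting argument goes through.

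It is also worth noting the structural difference from the paper's reverse direction. The paper does not argue directly against $2^{\aleph}>\aleph^{+}$; it assumes a set $X$ of intermediate cardinality $c$ with $\aleph<c<2^{\aleph}$, shows via its case tables that deciding $x\in X$ for $x\in X$ forces $c=\aleph+1$ and deciding it for $x\notin X$ forces $2^{\aleph}=\aleph+1$, and concludes $c=\aleph+1=2^{\aleph}$, contradicting $c<2^{\aleph}$. Your version, once the witness is fixed as above, is arguably cleaner because it collapses the case analysis into a single cardinality comparison between $o(\aleph^{+})$ and $o(2^{\aleph})$; but as written it does not establish the implication, and you should also make explicit (as the paper's Remark following the theorem does) that the conclusion is only ``almost always'' in $x$, since some members of $X$ do occupy early positions in any enumeration.
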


\begin{proof}
Assume that:
\begin{enumerate}
\item $\emptyset\subseteq X\subseteq2^{\aleph}$, 
\item \emph{$X$ }has cardinality $\aleph<c<2^{\aleph}$,
\item Any $x\in X$ is expressed as a binary sequence of length at most
cardinal $\aleph\ge\aleph_{0}$, and
\item The amount of information needed to decide the relation $x\in X$
by an interleaved enumeration of $X$ or $2^{\aleph}-X$ is $<\aleph+1$. 
\end{enumerate}
The proof is summarized in the tables below, where a $\checked$ means
that the option is possible and $\times$ means that the option is
impossible. \\

\begin{tabular}{|c|c|c|}
\hline 
 &
Enumerate $X$ &
Enumerate $2^{\aleph}-X$\tabularnewline
\hline 
\hline 
$x\in X$ &
$<c$ $\checked$ &
$2^{\aleph}$ $\times$\tabularnewline
\hline 
$x\notin X$ &
$c$ $\times$ &
$<2^{\aleph}$ $\checked$\tabularnewline
\hline 
\end{tabular}\\

\emph{Table 1: The number of steps to decide $x\in X$ by enumeration}\\
\\
\begin{tabular}{|c|c|c|c|}
\hline 
$<c$ &
Proof Ref. &
$c$ &
Proof Ref.\tabularnewline
\hline 
\hline 
$\aleph+1<c$ $\times$ &
1 &
$\aleph+1<c$ $\times$ &
4\tabularnewline
\hline 
$\aleph+1=c$ $\checked$ &
2 &
$\aleph+1=c$ $\times$ &
5\tabularnewline
\hline 
$\aleph+1>c$ $\times$ &
3 &
$\aleph+1>c$ $\times$ &
3\tabularnewline
\hline 
\end{tabular}\\
\\
\\
\begin{tabular}{|c|c|c|c|}
\hline 
$<2^{\aleph}$ &
Proof Ref. &
$2^{\aleph}$ &
Proof Ref.\tabularnewline
\hline 
\hline 
\textbf{$\aleph+1<2^{\aleph}$ }$\times$ &
1 &
$c<2^{\aleph}$ $\times$ &
8\tabularnewline
\hline 
\textbf{$\aleph+1=2^{\aleph}$ }$\checked$ &
6 &
$c<2^{\aleph}$ $\times$ &
8\tabularnewline
\hline 
\textbf{$\aleph+1>2^{\aleph}$ }$\times$ &
7 &
$c<2^{\aleph}$ $\times$ &
8\tabularnewline
\hline 
\end{tabular}\\
\\
\emph{Table 2: The possible cardinal relationships for the number
of steps in Table 1 and proof references}\\
\emph{}\\
Proof references:\\
\\
1. $x\in X$ would almost always be decided in $\ge\aleph+1$ bits
for a given enumeration of $X$, contradicting assumption d). \\
2. $\aleph+1=c$ is consistent with assumption d), as $x\in X$ would
be decided in $<c=\aleph+1$ steps by enumeration.\\
3. $\aleph+1>c$ contradicts assumption b) $\aleph<c$, as there would
be a cardinal strictly between $\aleph$ and $\aleph+1$.\\
4. $x\in X$ would almost always be decided in $>\aleph+1$ bits for
a given enumeration of $X$, contradicting assumption d). \\
5. $\aleph+1=c$ implies that $\aleph+1$ bits are needed to decide
$x\in X$ by enumerating all of $X$, which contradicts assumption
d).\\
6. \textbf{$\aleph+1=2^{\aleph}$ }is consistent with assumption d),
as $x\in X$ would be decided in $<2^{\aleph}=\aleph+1$ steps by
enumeration.\\
7. \textbf{$\aleph+1>2^{\aleph}$ }contradicts Cantor's theorem tha\textbf{t
$\aleph+1\le2^{\aleph}$.}\\
8. $c<\left|2^{\aleph}-X\right|=2^{\aleph}$ and therefore $x\in X$
could always be decided in $<2^{\aleph}$ steps by enumeration.\\

We can conclude that if $x\in X$ then $c=\aleph+1$ and if $x\notin X$
then $\aleph+1=2^{\aleph}$. Using predicate logic\footnote{Existential elimination: for example, assume $(\exists x)(x\in X)$
and $(\forall x)(x\in X\rightarrow c=\aleph+1)$, then if $c\neq\aleph+1$
then by contraposition $(\forall x)(x\notin X)$ and hence $\neg(\exists x)(x\in X)$,
contradiction; hence $c=\aleph+1$.} we can conclude $(\exists x)(x\in X)\rightarrow c=\aleph+1$ and
$(\exists x)(x\in2^{\aleph}-X)\rightarrow\aleph+1=2^{\aleph}$. Since
both \emph{X} and $2^{\aleph}-X$ are not empty we can conclude that
$c=\aleph+1=2^{\aleph}$, which contradicts assumption b) that $c<2^{\aleph}$.
GCH then follows.\\
\\
Conversely, assume GCH. Then if $x\in X$ then by GCH $x$ will be
enumerated in $<\left|X\right|\le2^{\aleph}=\aleph+1$ steps. While
if $x\notin X$ then $x$ will be enumerated in $<\left|2^{\aleph}-X\right|=2^{\aleph}=\aleph+1$
steps. In either case then $x\in X$ can be decided by enumeration
in $<\aleph+1$ steps, \emph{i.e.} in $<\aleph+1$ bits. 
\end{proof}
\begin{rem}
In the proof above of Theorem \ref{thm:GCH-is-equivalent} there is
an assumption that an interleaved enumeration need not take more than
$\aleph$ bits to decide $x\in X$, and the proof of GCH by contradiction
is only valid if $x$ is sufficiently generic (a random variable)
to be in the bulk of an enumeration of $X$, \emph{i.e.} at $\ge\aleph+1$
steps from the start in proof references 1, 4 and 5 above. We can
do this by using the axiom of choice to enumerate $X$ such that each
$y\in X$ is decided in $<\aleph+1$ bits by interleaved enumeration
(which is all $y\in2^{\aleph}$) and choose $x$ to be in the bulk
of an enumeration of $X$. 
\end{rem}

\section{\label{sec:An-Information-Minimization}An Information Minimization
Principle}

This information minimization principle is an expression of the fact
that all sets and all membership relations can be hypercomputed and
that a set and a relation contain a certain number of bits of information,
and it does not matter how those bits are enumerated, as some enumeration
of this number of bits will define the set and decide the truth of
the relation for particular sets. We could in fact define a set $X$
of cardinality $\le2^{\aleph}$ as a set of sets $x$ that can be
defined in $\le\aleph$ bits by enumeration and the membership relation
between $x$ and $X$ (see Theorem \ref{thm:minimum-info}) can be
decided in $\le\aleph$ bits by enumeration. \\
\\
This may seem in conflict with the finite case, but membership of
a finite set of $2^{n}$ members for $n\ge1$ (which can be taken
to be natural numbers or binary sequences representing natural numbers)
can be decided in $\le n+1$ bits by using a binary search algorithm
if $X$ and the complement of $X$ are ordered in ascending order,
say $X=\{x(i):1\le i\le2^{n}\}$ and $x(i)=-1$ if $x(i)$ is not
defined. Then to decide whether $x\in X$, follow the algorithm in
the following pseudo-code, where all variables are natural numbers.\\
\\
\noindent\fbox{\begin{minipage}[t]{1\columnwidth - 2\fboxsep - 2\fboxrule}%
Set $left=0$

Set $right:=2^{n}$

Loop while $(left\le right)$

$mid:=(left+right)/2$

if $x(mid)\ge0$ then:
\begin{itemize}
\item if $x(mid)=x$ then return True
\item if $x(mid)<x$ then $left=mid$+1
\item if $x(mid)>x$ then $right:=mid$-1
\end{itemize}
End loop

return False%
\end{minipage}}\\
\\
\\
This program runs for $\le n+1$ steps in terms of the number of members
of $X$ enumerated. Of course the binary search could also be applied
to the complement of $X$, but the run time is again $\le n+1$ steps
in an enumeration. While the efficient enumeration of $X$ or the
complement to decide $x\in X$ relies on specific linear orderings
of $X$, the search process defines a binary expansion (whether the
midpoint is to the ``left'' or ``right'' of $x$ in the ordering)
of any $x\in2^{n}$ with a final member of the sequence representing
the decision whether $x\in X$ or not. In fact the binary expansion
of ``left'' and ``right'' labels mutually defines the sequence
of midpoints. This is suggestive of the approach in Theorem \ref{thm:GCH-is-equivalent}
that an efficient enumeration represents $x\in X$ as a binary $\aleph$-sequence
representing $x$ followed by a decision whether $x\in X$. It lends
support to the view that an efficient enumeration of $X\subseteq2^{\aleph}$
is always representable as a binary $\aleph$-sequence representing
$x$ followed by a decision whether $x\in X$. More generally, it
is also possible to use a midpoint construction where $X$ is a dense
subset of a closed interval in the standard topology of the real line,
say $[0,1]$, by choosing the midpoint of the interval if the midpoint
is a member of the interval or choosing a member of the set near the
midpoint (using the Axiom of Choice) otherwise. Then the $\omega$-sequence
of near-midpoints will converge to the point $x$ in the interval
(unless the near-midpoint algorithm chooses $x$ at some finite stage
in the enumeration), which may or may not be a member of $X$. \\
\\
By the definition of the number of bits of information, for every
set $X\subseteq2^{\aleph}$ there is a losslessly compressed set $Y$
(\emph{i.e.} cardinality $\left|Y\right|\le\left|X\right|)$ that
contains the same information as $X$. Let us assume that we can well-order
a binary $2^{\aleph}$-sequence in a monotonic way with the constant
sequence with the smaller cardinality as the initial sequence. This
is possible by choosing members of the sequence with value 0 and building
a sequence and doing the same for members of the sequence with value
1, and then concatenating them with the smallest set first. Otherwise
if the sequences have equal length of $2^{\aleph}$ a binary $2^{\aleph}$$\times2$-sequence
will be needed. Then we see that the maximum lossless compression
occurs when one of the constant sequences is empty, and in general
lossless compressibility will depend on the cardinality of the smaller
constant sequence. But is this the minimum amount of bits needed to
decide $x\in X$? The answer in general is ``no'' because each $x$
has a representation as a binary $\aleph$-sequence and it possible
to add an extra bit to every binary $\aleph$-sequence to indicate
whether $x\in X$ or not. Lossless compressibility adds complication
to computation of the minimum steps in the computation of $x\in X$
because in general the index of $x$ in an arbitrary binary $2^{\aleph}$-sequence
will need to be represented as an ordinal $<2^{\aleph}$ but when
the repeated pattern is a constant value then $x\in X$ can be determined
in a number of bits $\le\beth$, where $\beth$ is the length of the
initial sequence before the repeated pattern. If, however, we consider
only losslessly incompressible sets $X$ we can state a principle
of information minimization as follows:\footnote{This view does not contradict the speed up theorems in formal axiomatic
systems, see \cite{Buss1994}, because axiomatic systems constrain
the proof method to a finite sequence of computation steps, albeit
from a number of different axioms that depend on the axiom system.} \\
\\
\emph{}%
\noindent\fbox{\begin{minipage}[t]{1\columnwidth - 2\fboxsep - 2\fboxrule}%
\emph{Principle of Information Minimization: }For all losslessly incompressible
sets $X\subseteq2^{\aleph}$ and $x\in2^{\aleph}$ and for all relations
$x\in X$ there is a minimum amount of information $\mu$ such that
if a $\langle\nu,\nu,\nu\rangle$ -hypercomputer\footnote{Numbers of registers and states that are greater than the length of
the computation are not used; hence the number of states and registers
are set equal to the length of the computations.} can decide $x\in X$ in $\le\nu$ steps by any enumeration of $X$
and $2^{\aleph}-X$, it follows that a $\langle\mu,\mu,\mu\rangle$
-hypercomputer can decide $x\in X$ in $\mu\le\nu$ steps by an interleaved
enumeration of $X$ and $2^{\aleph}-X$. %
\end{minipage}}\\
\\
The argument for the Principle of Information Minimization is that
an enumeration that locates $x$ in an interleaved way in $X$ and
$2^{\aleph}-X$ (which is efficient for infinite sets $X$) should
take a number of steps no more than the number of bits of information
in $x\in X$. It should be noted that if $X$ is losslessly incompressible
then so is $2^{\aleph}-X$ as any pattern in a binary $2^{\aleph}$-sequence
representing $X$ will correspond to a bit-flipped pattern in a representation
of $2^{\aleph}-X$. As further motivation for this argument, we can,
as noted above, regard the shortest enumeration of a member $x$ of
a set $X$ as an optimal search algorithm for $x$. That is to say,
each successive bit of $x$ corresponds to a choice of (nested) intervals
in a linear order of $X$. Each interval can be represented by a member
of the interval, and after the number of bits equal to the length
of $x$, $\aleph$, $x$ will be definitely be located or not, i.e.
$x\in X$ will be decided by enumeration. This motivation will not
be pursued further in this paper because it needs the development
of topological arguments to explain the idea more fully. \\
, \\
In Theorem \ref{thm:minimum-info}  below we show (highly non-constructively)
that the number of bits of information in the relation $x\in X$ for
$X\subseteq2^{\alpha}$ is $<\aleph+1$. Let us take $X\subset2^{\aleph}$
to be a losslessly incompressible set of cardinality $2^{\aleph}$
which is \emph{entangled} in losslessly incompressible set $2^{\aleph}-X$,
\emph{i.e.} each $\aleph$-sequence in $X$ is covered by $\aleph$-sequences
in $2^{\aleph}-X$ and \emph{vice vers}a.\footnote{We can also say that both $X$ and $2^{\aleph}-X$ are dense in $2^{\aleph}$.}
The most interesting case\footnote{The other cases are dealt with in Corollary \ref{cor:GCH}. }
is when both $X$ and $2^{\aleph}-X$ have cardinality $2^{\aleph}$.
$X$ can be constructed by the Axiom of Choice, making sure that for
each initial $<\aleph$-sequence, $s$, one $\aleph$-sequence $x$
that has $s$ as an initial $<\aleph$-sequence is selected to be
put in $X$, and one $\aleph$-sequence $y\ne x$ that has $s$ as
an initial $<\aleph$-sequence is selected to be put in $2^{\aleph}-X$;
and dividing other members of $2^{\aleph}$ equally among $X$ and
$2^{\aleph}-X$ (by well-ordering $2^{\aleph}-S$, where $S$ is the
set of $\aleph$-sequences already selected, and alternately putting
members of the well-order in $X$ and $2^{\aleph}-X$, putting limit
ordinal members in $X$ for definiteness, since the number of successor
ordinals is the same as the number of limit ordinals $<2^{\aleph})$.
It is shown in Lemma \ref{lem:incompress} that almost all sets of
infinite cardinality are not losslessly compressible in terms of number
of bits of information, so we can choose two incompressible sets of
cardinality $2^{\aleph}$ (as the constraint of $X$ and $2^{\aleph}-X$
each containing a dense subset of cardinality $\aleph$ does not affect
the choice of other members of $X$ and $2^{\aleph}-X$). Corollary
 \ref{cor:GCH} shows that GCH follows from Theorem \ref{thm:GCH-is-equivalent}
for $X$ and $2^{\aleph}-X$ incompressible.
\begin{thm}
\label{thm:minimum-info}A universal hypercomputer computes the minimal
amount of information needed to decide the relation $x\in X$, where
$x$ is any binary sequence of length at most cardinal $\aleph\ge\aleph_{0}$
and X has cardinality $\le2^{\aleph}$, in $V$ as $\aleph$.
\end{thm}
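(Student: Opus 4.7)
The plan is to establish $\mu = \aleph$ by showing both $\mu \le \aleph$ (by exhibiting an interleaved enumeration of length $\aleph$) and $\mu \ge \aleph$ (no shorter enumeration can distinguish $x$ from its entangled near-neighbours), and then to observe that a suitable $\langle \aleph, \aleph, \aleph \rangle$-hypercomputer---which belongs to the proper class constituting the universal hypercomputer---realises both halves of the argument. I would reduce to the interesting case flagged just before the theorem: $X, 2^{\aleph}-X \subseteq 2^{\aleph}$ both of cardinality $2^{\aleph}$, both losslessly incompressible (which by Lemma \ref{lem:incompress} is the generic situation), and entangled in the sense that every $<\aleph$-prefix extends into both sets. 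Corollary \ref{cor:GCH} will handle the residual compressible cases.

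For the upper bound I would design a program schema on a $\langle \aleph, \aleph, \aleph \rangle$-hypercomputer performing a transfinite binary-search on the tree of $\aleph$-sequences, in the spirit of the binary-search pseudo-code displayed just above the theorem. The input $x$ lives in read-only registers $I_\alpha$; enumeration functions for $X$ and $2^{\aleph}-X$, obtained from the Axiom of Choice, supply witnesses at each prefix. At a successor stage $\beta+1$ the machine reads the $\beta$-th bit of $x$ and advances a head in the interleaved enumeration of $X$ and $2^{\aleph}-X$ to the next element whose $\beta$-th bit agrees with $x$; entanglement guarantees such a witness exists on both sides for every $\beta < \aleph$. At limit stages $\lambda < \aleph$ the \emph{lim sup} semantics of Definition \ref{def:2.2} propagate the common prefix of $x$ of length $\lambda$ that has been assembled in a dedicated block of working registers. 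After exactly $\aleph$ steps only one candidate survives on the correct side, $x$ is identified, and the flag $W_{0,0}$ records the verdict. Hence $\mu \le \aleph$.

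For the lower bound I would argue by distinguishability. Suppose for contradiction some interleaved enumeration decided $x \in X$ after $\nu < \aleph$ bits. Then the computation can only have interrogated a $\nu$-prefix of $x$. By entanglement and lossless incompressibility, both $X$ and $2^{\aleph}-X$ contain $\aleph$-sequences extending that $\nu$-prefix, so we can choose $x' \ne x$ agreeing with $x$ on its first $\nu$ bits but with the opposite membership. The same computation path would return the same verdict on $x'$ as on $x$, contradicting the membership switch. Hence $\mu \ge \aleph$, and combining, $\mu = \aleph$. The Principle of Information Minimization then guarantees that this minimum is actually \emph{attained} by some interleaved enumeration of length $\aleph$; that enumeration is a concrete member of the proper class of hypercomputers constituting the universal hypercomputer, so the universal hypercomputer computes the minimum as $\aleph$.

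The main obstacle will be the behaviour of the program at limit stages: one must verify that the \emph{lim sup} convention on $S_\lambda$, $H_\lambda$ and $C_\lambda$ does not corrupt the prefix being assembled, while the pointers enumerating $X$ and $2^{\aleph}-X$---whose ordinal indices may be cofinal in $\lambda$---must be handled so that the search continues coherently rather than defaulting to $1$s. I would arrange this by partitioning the working registers into disjoint blocks $W_{\beta, \alpha}$ (as permitted in the definition of an $\langle\aleph,\beth,\daleth\rangle$-hypercomputer), one block for each side of the interleaving and one for the assembled prefix of $x$; only the prefix block is required to be eventually constant at each limit, so the \emph{lim sup} there yields the intended values, while the pointer registers are explicitly reset at limit stages by instructions of type $\langle 12, \beta \rangle$, avoiding spurious $1$s at non-eventually-constant positions.
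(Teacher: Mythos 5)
Your route is genuinely different from the paper's, and the two halves fare differently. Your lower bound (the adversary argument: a computation that halts after $\nu<\aleph$ bits has only interrogated a $\nu$-prefix of $x$, and entanglement supplies an $x'$ with the same prefix and opposite membership) is sound and is in fact sharper than the paper's one-line justification, which merely observes that a losslessly incompressible $x$ needs $\aleph$ bits to be specified at all (Lemma \ref{lem:incompress}). But your upper bound has a genuine gap. The step ``advance a head in the interleaved enumeration of $X$ and $2^{\aleph}-X$ to the next element whose $\beta$-th bit agrees with $x$'' is not a bounded-cost operation: the enumerations have length $2^{\aleph}$, and the next agreeing witness may sit at an ordinal position of cardinality up to $2^{\aleph}$, so locating it can cost up to $2^{\aleph}$ steps \emph{per bit of $x$}. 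Entanglement guarantees that such witnesses exist at every prefix, not that they are reachable within the step budget; summing over $\aleph$ bits you only get the trivial bound $2^{\aleph}$, which is exactly the bound the paper's Corollary \ref{cor:GCH} starts from and then improves only by \emph{assuming} the Information Minimization Principle. Relatedly, even granting the search, after all $\aleph$ bits are read both sides still contain candidates at every stage $\beta<\aleph$ (again by entanglement), so the verdict must materialise precisely at the final limit stage $o(\aleph)$ out of the lim sup semantics, and you have not shown that the surviving-candidate flags are eventually constant there; your limit-stage discussion addresses intermediate $\lambda<\aleph$ but not this terminal step. Your closing appeal to the Principle of Information Minimization to say the minimum is ``attained'' is also circular with your own construction, and misplaces the logical order: in the paper the Principle is invoked only in Corollary \ref{cor:GCH}, not in this theorem.

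The paper avoids all of this with a non-constructive device: it replaces $X$ by the associated set $2^{\aleph}(X)$ of $(o(\aleph)+1)$-sequences $x\cup\{\langle o(\aleph+1),1\rangle\}$ or $x\cup\{\langle o(\aleph+1),0\rangle\}$ according as $x\in X$ or not, so that deciding membership reduces to reading out the $\aleph$ bits of $x$'s associated sequence and inspecting the single appended bit --- $\aleph$ steps with no search at all. If you want to keep your search-based upper bound you would need either to assume the Principle of Information Minimization at this point (weakening the theorem to a conditional) or to supply an enumeration of $X$ and $2^{\aleph}-X$ organised so that the witness for each successive bit of $x$ is always within reach in a uniformly bounded number of steps; the latter is essentially equivalent to building the paper's associated set, so the cleanest repair is to adopt that device for the upper bound and retain your adversary argument for the lower bound.
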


\begin{proof}
For a set $X$ that consists of binary $\aleph$-sequences, associate
to every binary $\aleph$-sequence $x\in2^{\aleph}$ a $o(\aleph)+1$-sequence
$x\cup\{\langle o(\aleph+1),1\rangle\}$ if $x\in X$ and $x\cup\{\langle o(\aleph+1),0\rangle\}$
if $x\notin X$, where $\langle a,b\rangle=\{a,\{a,b\}\}$. Call the
associated set $2^{\aleph(}(X)$. Properties of sets can be recovered
from the associated sets, \emph{e.g.} $x\in X$ if $x\cup\{\langle o(\aleph+1),1\rangle\}\in2^{\aleph}(X)$,
$X\subseteq Y$ if $(\forall(x\cup\{\langle o(\aleph+1),1\rangle\})\in2^{\aleph}(X))(x\cup\{\langle o(\aleph+1),1\rangle\}\in2^{\aleph}(Y))$.
Associated sets are sets where membership is always decided, which
is true of membership computed by a universal hypercomputer (see Theorem
\ref{rthm:serial-hyper})\footnote{It is possible to take a topological approach to the hypercomputation
of $x\in X$. The set $X$ can be given a topology where basic open
sets are sets of $\aleph$-sequences that extend some initial $<\aleph$-sequence.
It can be seen that basic open sets are closed as well as open (because
they have their own limit points and no limit points belonging to
their complement in $X$). The intersection of basic clopen (closed
and open) sets that are neighbourhoods of $x$ have intersection $x$
if $x\in X$ and is empty otherwise. After $\aleph$ steps a hypercomputer
can decide whether $x\in X$ or not. Replacement of sets by associated
sets is a clearer hypercomputational approach to deciding set membership
than a topological approach.}. If then a set $X$ is identified with its associated set $2^{\aleph(}(X)$,\footnote{Associated sets obey the standard rules of intersection, union and
complement but only functions from one set to another that are allowed
are those that preserve the $o(\aleph)+1$-th member of the binary
sequence representing set $x$ in the domain of the function. However,
it is true that every set that exists in the Von Neumann universe
of sets, $V$, has an associated set, because $2^{\aleph(}(X)$ can
always be hypercomputed from $X.$ } then any $x\in X$ can be decided in steps of cardinality $\le\aleph$,\emph{
i.e.} in $<\aleph+1$ bits, by enumerating the $o(\aleph)+1$-sequence
$y\in2^{\aleph}(X)$ corresponding to $x$, and checking its $o(\aleph)+1$-th
member. $x\in X$ cannot be decided in $<\aleph$ steps in general
because $x$ requires $\aleph$ bits to be specified if $x$ is a
losslessly incompressible binary $\aleph$-sequence (which always
exist for infinite $\aleph$ by Lemma \ref{lem:incompress}). 
\end{proof}
\begin{cor}
\label{cor:GCH}GCH is computed as true in $V$if the Information
Minimization Principle holds.
\end{cor}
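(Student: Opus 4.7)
The plan is to combine Theorem \ref{thm:minimum-info} (which identifies the minimum information to decide $x\in X$ as $\aleph$) with the Principle of Information Minimization (which guarantees this minimum is actually attained by an interleaved enumeration) and then invoke Theorem \ref{thm:GCH-is-equivalent} to conclude GCH. So the target is to show that for every $X\subseteq 2^{\aleph}$ and every $x\in 2^{\aleph}$, the relation $x\in X$ can be decided by an interleaved enumeration of $X$ and $2^{\aleph}-X$ in $<\aleph+1$ steps, since this is precisely the information-theoretic condition shown equivalent to GCH in Theorem \ref{thm:GCH-is-equivalent}.

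First I would separate into the losslessly incompressible and losslessly compressible cases. For the incompressible case, which by Lemma \ref{lem:incompress} covers almost all sets of each infinite cardinality, Theorem \ref{thm:minimum-info} supplies a hypercomputation (on the associated set $2^{\aleph}(X)$) that decides $x\in X$ in $\le\aleph$ steps, so $\nu\le\aleph$. The Information Minimization Principle then asserts the existence of a $\langle\mu,\mu,\mu\rangle$-hypercomputer that decides $x\in X$ by an \emph{interleaved} enumeration of $X$ and $2^{\aleph}-X$ in $\mu\le\nu\le\aleph<\aleph+1$ steps, which is exactly the right-hand side of the equivalence in Theorem \ref{thm:GCH-is-equivalent}.

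Next, for the losslessly compressible case, I would argue that compressibility only makes the decision cheaper, not more expensive. If the $2^{\aleph}$-sequence representing $X$ decomposes into an initial $<\aleph$-sequence followed by $\aleph$-many repetitions of a $<\aleph$-pattern, then the index of $x$ within this structure, together with the pattern, determines $x\in X$ using fewer than $\aleph$ bits (as sketched in the discussion of the binary search pseudocode and in the Principle of Information Minimization motivation). Interleaving this with the symmetric compression of $2^{\aleph}-X$ still yields a decision procedure in $<\aleph+1$ steps. Because Theorem \ref{thm:GCH-is-equivalent} is a cardinality-theoretic (``almost always'') statement, the genericity remark after Theorem \ref{thm:GCH-is-equivalent} lets me restrict attention to generic $x$ in the bulk of the enumeration, so the remaining compressible case does not obstruct the conclusion.

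Finally, I would assemble the pieces: because every $X\subseteq 2^{\aleph}$ (for arbitrary infinite $\aleph$) admits an interleaved decision procedure for $x\in X$ of length $<\aleph+1$, the information-theoretic condition of Theorem \ref{thm:GCH-is-equivalent} is satisfied in $V$, and GCH follows. The main obstacle I expect is the compressible-set case: one must make sure that the Information Minimization Principle, which is stated for incompressible sets, is not silently being invoked there, and that the cheaper decision procedures in the compressible case genuinely produce an interleaved enumeration in the sense of Definition \ref{def:An-interleaved-enumeration} rather than an enumeration of only one side; handling this cleanly, together with the genericity of $x$, is the delicate step.
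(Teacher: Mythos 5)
Your top-level strategy is the paper's: use Theorem \ref{thm:minimum-info} to fix the minimum information for deciding $x\in X$ at $\aleph$, the Information Minimization Principle to realize that minimum by an interleaved enumeration, and Theorem \ref{thm:GCH-is-equivalent} to convert the $<\aleph+1$ bound into GCH. But you instantiate the Principle differently, and your case split opens a gap the paper deliberately avoids. On the instantiation: you take the associated-set computation of Theorem \ref{thm:minimum-info} as the $\nu$-witness (``$\nu\le\aleph$'') and then apply the Principle to get $\mu\le\nu$. The Principle's hypothesis, however, requires a hypercomputer that decides $x\in X$ by an \emph{enumeration of $X$ and $2^{\aleph}-X$}; the associated-set computation enumerates the $o(\aleph)+1$ bits of the single sequence attached to $x$, not the members of $X$ and its complement, so it does not discharge that hypothesis --- and if it did, you would already have the $\le\aleph$ enumeration bound and the Principle would be redundant. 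The paper instead uses the trivial $<2^{\aleph}$-step interleaved enumeration by a $\langle2^{\aleph},2^{\aleph},2^{\aleph}\rangle$-hypercomputer as the $\nu$-witness, and uses Theorem \ref{thm:minimum-info} only to identify $\mu=\aleph$; the Principle then pulls the interleaved enumeration down to $\aleph$ steps.

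The more serious issue is your compressible case. Because you aim to verify the hypothesis of Theorem \ref{thm:GCH-is-equivalent} for \emph{every} $X\subseteq2^{\aleph}$, you must handle compressible $X$, and you do so by asserting that compressibility ``only makes the decision cheaper.'' The paper's own discussion contradicts this: for a general losslessly compressible $2^{\aleph}$-sequence the index of $x$ must still be represented as an ordinal $<2^{\aleph}$, and the cost only drops to the length of the initial segment when the repeated pattern is constant; this is exactly why the Principle is stated for incompressible sets only. The paper never needs the compressible case. It fixes a pair of entangled, losslessly incompressible sets $X$ and $2^{\aleph}-X$ of cardinality $2^{\aleph}$ (constructed via Lemma \ref{lem:incompress} and the Axiom of Choice), derives $2^{\aleph}\le\aleph+1$ from the fact that the interleaved enumeration reaches every $x$ in $<\aleph+1$ steps (with $\aleph+1\le2^{\aleph}$ from Cantor), and then disposes of the remaining configurations --- one side of cardinality $\le\aleph$, or of intermediate cardinality $c$ --- still entirely within the incompressible realm, since Lemma \ref{lem:incompress} supplies incompressible witnesses of every infinite cardinality. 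If you replace your compressible case by the observation that any GCH-violating $X$ of intermediate cardinality may be taken losslessly incompressible, your argument closes along the paper's lines; as written, the compressible branch is unsupported.
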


\begin{proof}
By Theorem \ref{thm:minimum-info}  $\aleph$ is the minimum number
of bits needed to decide $x\in X$. Choose $X\subset2^{\aleph}$ and
$2^{\aleph}-X$ to be losslessly incompressible sets of cardinality
$2^{\aleph}$ (see Remark 8). Since $X$ and $2^{\aleph}-X$ are losslessly
incompressible sets, it follows that we can apply the Information
Minimization Principle. Then, since a $\langle2^{\aleph},2^{\aleph},2^{\aleph}\rangle$
-hypercomputer can decide $x\in X$ by interleaved enumeration of
$X$ and $2^{\aleph}-X$ in $<2^{\aleph}$ bits, it follows from the
Information Minimization Principle that a $\langle\aleph+1,\aleph+1,\aleph+1\rangle$
-hypercomputer can decide $x\in X$ by interleaved enumeration\footnote{The program for interleaved enumeration will loop through members
of $X$ and $2^{\aleph}-X$  in a specific order and exit and return
true when it matches a specific $\aleph$-sequence $x$ (which it
always does in $<2^{\aleph}$ steps). It is identical to the program
for $(\exists y)R(y)\wedge\exists y)S(y)$ in Theorem \ref{rthm:serial-hyper}
below with $R(y)=(x=y\wedge y\in X).$and $S(y)=(x=y\wedge y\in2^{\aleph}-X)$,
although we only need to load in $\aleph+1$ members of $X$ and $2^{\aleph}-X$
in light of the Information Minimization Principle.} of $X$ and $2^{\aleph}-X$ in (any ordinal of cardinality) $\aleph$
steps,\emph{ i.e}. the number of steps is $<\aleph+1$ bits. Hence
$2^{\aleph}=\aleph+1$ follows directly from Theorem \ref{thm:GCH-is-equivalent};
or we can note that we have $x\in X$ if and only $x$ is in an interleaved
enumeration of $X$ and $2^{\aleph}-X$ in $<2^{\aleph}$ steps (since
an interleaved enumeration can be created from enumerations of $X$
and $2^{\aleph}-X$, see Definition \ref{def:An-interleaved-enumeration})
only if $x$ is in an interleaved enumeration of $X$ and $2^{\aleph}-X$
in $<\aleph+1$ steps for losslessly incompressible $X$ and $2^{\aleph}-X$.
It follows that $2^{\aleph}\le\aleph+1$, and $\aleph+1\le2^{\aleph}$
by Cantor's theorem. Other cases are where $X$ and $2^{\aleph}-X$
are losslessly incompressible sets and one has cardinality $\le\aleph$
(including being empty or being countable); and where $X$ and $2^{\aleph}-X$
are incompressible sets and one of $X$ or $2^{\aleph}-X$ and has
cardinality $\aleph<c<2^{\aleph}$. The former case shows that $x\in X$
can be decided in $\le\aleph$ steps, which is consistent with Theorem
\ref{thm:GCH-is-equivalent}. The latter case is shown by Theorem
\ref{thm:GCH-is-equivalent} to be impossible (since $x\in X$ can
be decided in $<c$ or $<2^{\aleph}$ steps, leading to $c=2^{\aleph}=\aleph+1$).
Hence we have shown GCH is computed as true in $V$ based on Theorem
\ref{thm:minimum-info}.
\end{proof}
\begin{rem}
The result in Theorem \ref{thm:minimum-info} is highly non-constructive,
and relies on the Information Minimization Principle and on there
existing a set $Y$ which corresponds to set $X\subseteq2^{\aleph}$
such that $Y$ is a set of $o(\aleph)+1$-sequences which computes
the decision problem for every $x\in X$ and appends the results to
the $\aleph$-sequence for $x$ in $2^{\aleph}$. This set $Y$, or
$2^{\aleph}(X)$ as it was called in Theorem \ref{thm:minimum-info},
is not computable in general by a finite computer, but needs a (universal)
hypercomputer. It is possible, as noted above, to reject this view
on the grounds of its computational or ontological assumptions (that
every set is computable and every relation decidable). It is also
possible to substitute other bounds on the decision problem for $x\in X$,
such as linking sets to formulas of fixed bounded quantifier complexity,;
but those bounds of course would also need motivation. It is also
worth noting that Theorem \ref{thm:minimum-info} also leads to a
very nice structure for, for example, the real numbers. Two entangled
uncountable sets of real numbers are either one countable set entangled
with an uncountable set of real numbers (\emph{viz.} a continuum)
or two entangled continua.\footnote{Of course the topological properties of the two entangled continua
may be different, for example a Cantor set and an open dense continuum.} 
\end{rem}

\section{Results about the Universal hypercomputer}
\begin{thm}
\label{rthm:serial-hyper} A serial $\langle2^{\aleph},2^{\aleph},2^{\aleph}\rangle$-hypercomputer
can compute a) the truth of first-order propositions with quantification
over sets that require $\leq\aleph$ bits of information to define,
b) the truth of first-order propositions like a) but with the addition
of allowing set membership of sets that require $\leq2^{\aleph}$
bits of information to define, and c) a serial $\langle2^{2^{\aleph}},2{}^{2^{\aleph}},2{}^{2^{\aleph}}\rangle$-hypercomputer
can compute the truth of second-order propositions about sets that
require $\leq\aleph$ bits of information to define.
\end{thm}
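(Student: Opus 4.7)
The plan is to prove all three parts by structural induction on formula complexity, letting the infinite-conjunction and infinite-disjunction program schemas of footnote 9 carry the quantifier step, and using the ``associated set'' encoding of Theorem \ref{thm:minimum-info} to handle the atomic case.

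For part (a), the atomic case proceeds by noting that any set $x$ requiring $\leq\aleph$ bits can be loaded as a binary $\aleph$-sequence into $\aleph$ of the available $2^{\aleph}$ input registers; membership $x\in y$ for two such sets reduces to reading the single flag bit of $y$'s associated set $2^{\aleph}(y)$, computable in $\leq\aleph$ steps. Boolean connectives are handled by finite sub-programs acting on flag registers. The decisive step is quantification: to decide $(\forall x)\phi(x)$ with $x$ ranging over sets of $\leq\aleph$ bits, I would enumerate the $\leq 2^{\aleph}$-many such candidates across disjoint working register banks $W_{\beta,\alpha}$, one bank per candidate $x_{\beta}$, evaluate $\phi(x_{\beta})$ in each bank by the inductive hypothesis, write the truth value into a dedicated cell $W_{1,\beta}$, and then run the infinite-conjunction schema across these cells. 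This uses $o(2^{\aleph})$ steps and $o(2^{\aleph})$ states per quantifier, within the hypercomputer's budget. Existential quantifiers are handled dually by the infinite disjunction schema.

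For part (b), the only change is at the atomic level: membership $x\in y$ where $y$ requires up to $2^{\aleph}$ bits is still decidable by reading one bit of $y$'s associated representation, which fits exactly in the $2^{\aleph}$ available registers; the quantifier range is unchanged, so the inductive step is identical to (a). For part (c), second-order quantifiers $(\forall X)\phi(X)$ with $X$ a set of sets each of $\leq\aleph$ bits range over subsets of a class of cardinality $2^{\aleph}$, hence over at most $2^{2^{\aleph}}$ candidates. The same enumerate-then-conjoin scheme then requires precisely the enlarged $\langle 2^{2^{\aleph}}, 2^{2^{\aleph}}, 2^{2^{\aleph}}\rangle$-hypercomputer so that $2^{2^{\aleph}}$ register banks, states, and steps are simultaneously available; evaluation of $\phi(X)$ in each bank reduces to the first-order case of (a), with $X$ supplied as a read-only parameter in an auxiliary sub-bank.

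The main obstacle is managing the step budget across nested quantifiers so that the composed computation still fits inside $o(2^{\aleph})$ (respectively $o(2^{2^{\aleph}})$) steps, and verifying that the limit-ordinal conventions for $S_{\lambda}, H_{\lambda}, C_{\lambda}$ give the intended semantics at the interface between each inner $\phi$ evaluation and the outer conjunction. Because every formula has finite depth $k$, the iterated quantifiers contribute ordinal products $o(2^{\aleph})\cdot k$, which remain of cardinality $2^{\aleph}$; and the ``eventually constant'' rule guarantees that the completion flag $W_{0,0}$ written at the end of each inner evaluation is correctly observed by the outer program at the next limit step, so the recursion closes without resource blow-up.
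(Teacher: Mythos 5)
Your quantifier step is essentially the paper's: enumerate the $\le 2^{\aleph}$ candidates into disjoint register banks, evaluate the matrix in each, and fold the truth values with the infinite conjunction/disjunction schema of footnote \ref{fn:9}, closing the induction on (finite) quantifier depth. The cardinality bookkeeping also agrees, with one small correction: the looping itself needs only finitely many new states (one per loop, as the paper notes); the $2^{\aleph}$ in the program-length coordinate is consumed by hard-coding the $2^{\aleph}$ candidate sets into the registers, not by the quantifier machinery.

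The genuine gap is in your atomic case. You decide $x\in y$ by ``reading the single flag bit of $y$'s associated set $2^{\aleph}(y)$,'' but the associated set is by construction the set in which every membership question about $y$ has already been answered: the $o(\aleph)+1$-th bit \emph{is} the answer. Assuming the input arrives in that form begs exactly the question the atomic case must settle, and it also inverts the paper's dependency structure --- Theorem \ref{thm:minimum-info} justifies the decidability of associated-set membership by citing Theorem \ref{rthm:serial-hyper}, so you cannot use the former to ground the latter. The paper instead decides membership directly: for part a) the base case is a recursive relation evaluated by a finite program in $\le\aleph$ steps, and for part b) the set $X$ (up to $2^{\aleph}$ bits) is written out register by register from a blank tape by a program of length $2^{\aleph}$, after which $x\in X$ is decided by looping through the members $y$ of $X$ and testing $y=x$ bit by bit. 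Replace your flag-bit appeal with this explicit enumeration-and-match argument and the rest of your induction goes through as in the paper.
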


\begin{proof}
a) To start, the truth of recursive relations involving finitely many
sets that require $\leq\aleph$ bits of information to define (including
the standard logical operators $\wedge$, $\vee,$$\rightarrow$,
$\leftrightarrow$ and $\neg$) can be decided by a program with finitely
many instructions in $\leq\aleph$ steps because the recursive relation
generates a finite program and\emph{ }$\leq\aleph$ steps are needed,
one for each bit.\emph{ }Then to decide\emph{ $(\forall x)R(x)$ }for\emph{
x }a set that requires $\leq\aleph$ bits of information to define
and\emph{ R }recursive, loop through the set of all sets that require
$\leq\aleph$ bits of information to define, run the program for \emph{R(x)}
in disjoint register sets in series, and then copy the results (0
or 1, \emph{i.e.} false or true) to another disjoint set of registers,
the computation having $2^{\aleph}$ steps\footnote{Any set that requires $\leq\aleph$ bits of information to define
can be either be a member or not a member of the set of such sets;
hence the cardinality of the set of all sets that require $\leq\aleph$
bits of information to define, \emph{X} say, is the same as the set
of all functions $\aleph\rightarrow2$, \emph{i.e.} $2^{\aleph}$.
Hence the total number of steps to loop through every member of $X$
is $\aleph\times2^{\aleph}=2^{\aleph}$. }. To ``loop through'' the quantification domain, coding can be used
to detect in finitely many instructions which registers have been
accessed by the program,\footnote{If a sequence $\langle a_{1},a_{2},\cdots,a_{i<\aleph},\cdots\rangle$
of length $2^{\aleph}$, where $a_{i}$ is a member of the quantification
domain and a binary sequence of length $<\aleph+1$, is coded as $\langle a_{1},1,a_{2},1,\cdots,1,a_{i<2^{\aleph}},1,\cdots\rangle$,
by placing a 1 marker after every successor and limit member of the
sequence, then the \emph{1} can be replaced with \emph{0} if the previous
register has been accessed by the program. The program can proceed
until it finds a register succeeded by a \emph{1}.} and the least unaccessed member of the set can be accessed next\footnote{Looping requires one new state, which acts as a label for the start
of the loop and which which is the next state for instructions in
the loop after the program for $R(x)$ has run.}. To create and load all sets that require $\leq\aleph$ bits of information
to define requires a program of length $\leq2^{\aleph}$ because there
are $\leq2^{\aleph}$ such sets to be computed, each requiring $\leq\aleph$
instructions. The conjunction (``and'') of the truth values of \emph{R(x)}
is then computed by a finite program (see footnote \ref{fn:9} for
the outline of a finite program to compute the truth value of a conjunction),
and \emph{$(\forall x)R(x)$ }is true if and only if the conjunction
has value \emph{1} (true). \emph{$(\exists x)R(x)$} can be decided
similarly using disjunctions (``or'') rather than conjunctions.
By induction on quantifier complexity the truth of any first-order
proposition about sets that require $\leq\aleph$ bits of information
to define (with a recursive quantifier free formula) can be decided
by a\emph{ }$\langle2^{\aleph},<\aleph_{0},2^{\aleph}\rangle$-hypercomputer
given a set of sets that require $\leq\aleph$ bits of information
to define. If the loading of the input is included, a serial $\langle2^{\aleph},2^{\aleph},2^{\aleph}\rangle$-hypercomputer
suffices to compute the truth of any first-order quantified proposition
about sets that require $\leq\aleph$ bits of information to define.\\
\\
b) To show that a first-order quantified proposition with quantification
over sets that require $\leq\aleph$ bits of information to define
and with the addition of specific sets that require $\leq2^{\aleph}$
bits of information to define can also be computed by a serial $\langle2^{\aleph},2^{\aleph},2^{\aleph}\rangle$-hypercomputer,
we note that a serial $\langle2^{\aleph},2^{\aleph},2^{\aleph}\rangle$-hypercomputer
can compute any set that require $\leq2^{\aleph}$ bits of information
to define by starting with a blank tape (\emph{i.e.} all \emph{0}s)
and running a program of length $2^{\aleph}$ to write a value (\emph{0
}or \emph{1}) to each register. Membership of a set, $x\in X$, where
each \emph{x} must take $\leq\aleph$ bits to define to be consistent
with a),\footnote{$x\in2^{\aleph}$ as x takes $\leq\aleph$ bits to define.}
can therefore be computed by a serial $\langle2^{\aleph},2^{\aleph},2^{\aleph}\rangle$-hypercomputer
by looping through the set \emph{X} with current value $y\in X$ and
checking whether $y=x$. The inductive argument in a) above can then
be applied to show that a serial $\langle2^{\aleph},2^{\aleph},2^{\aleph}\rangle$-hypercomputer
can compute the truth of any first-order proposition with quantification
over sets that require $\leq\aleph$ bits of information to define
and which have set membership of sets that require $\leq2^{\aleph}$
bits of information to define.\\
\\
c) The truth of a second-order proposition of set theory with quantification
over sets that require $\leq2^{\aleph}$ bits of information and sets
of sets that require $\leq\aleph$ bits of information can be decided
by ``looping through'' every set of sets that require $\leq\aleph$
bits of information,\footnote{Note that a marker such as $\langle1,0,1\rangle$ can be added to
each set of sets that require $\leq\aleph$ bits of information to
define in the sequence of registers.} which requires $2^{2^{\aleph}}$ registers and $2^{2^{\aleph}}$
steps with a finite program and which depends on $2^{2^{\aleph}}$
instructions to create and ``load'' the data, \emph{i.e.} the set
of sets of sets that require $\leq\aleph$ bits of information.
\begin{thm}
A parallel \emph{$\langle2^{\aleph},\aleph,\aleph\rangle$}-hypercomputer
can compute a) the truth of first-order propositions with quantification
over sets that require $\leq\aleph$ bits of information to define,
b) the truth of first-order propositions like a) but with the addition
of allowing set membership of sets that require $\leq2^{\aleph}$
bits of information to define, and c) a parallel $\langle2^{2^{\aleph}},\aleph,\aleph\rangle$-hypercomputer
can compute the truth of second-order propositions about sets that
require $\leq\aleph$ bits of information to define.
\end{thm}

a) Note that a parallel \emph{$\langle2^{\aleph},\aleph,\aleph\rangle$}-hypercomputer
can write $2^{\aleph}$ sets that require $\leq\aleph$ bits of information
to define into the registers in parallel. Proceed by induction with
the hypothesis that a parallel \emph{$\langle2^{\aleph},\aleph,\aleph\rangle$}-hypercomputer
can compute the truth of first-order quantified propositions of sets
that require $\leq\aleph$ bits of information to define, noting that
for the basis case of a recursive relationship between finitely many
sets that require $\leq\aleph$ bits of information to define it takes
$\leq\aleph$ instructions and $\leq\aleph$ steps to write finitely
many sets that require $\leq\aleph$ bits of information to define
to a set of registers and then finitely many instructions and $\leq\aleph$
steps to compute the recursive relationship for those sets. For the
induction step, note that for $(\forall x)R(x)$ or $(\exists x)R(x)$,
$2^{\aleph}$ sets that require $\leq\aleph$ bits of information
to define can be loaded by a parallel\emph{ $\langle2^{\aleph},\aleph,\aleph\rangle$}-hypercomputer
across $2^{\aleph}$ disjoint sets of $2^{\aleph}$ registers and
the quantification can be parallelised by running a (finite) program
for deciding\emph{ R(x)} in parallel in $\aleph$ steps, for $(\forall x)R(x)$
writing\emph{ 1 }to an output register of the management program initially
and then writing\emph{ 0} to the output register if any of the \emph{R(x)}
computes as false, while for $(\exists x)R(x)$ writing\emph{ 0 }to
an output register initially and then writing\emph{ 1} to the output
register if any of the \emph{R(x)} computes as true. \\
\\
b) If we add propositions involving membership of $\leq2^{\aleph}$
specific sets, assumed for consistency with a) to consist of members
which have $\leq\aleph$ bits to define, then to write a specific
set requires a parallel \emph{$\langle2^{\aleph},\aleph,\aleph\rangle$}-hypercomputer
if each disjoint set of $2^{\aleph}$ registers contains one set that
requires $\leq\aleph$ bits of information to define.\footnote{It is assumed that the $\leq\aleph$ bits are presented serially and
cannot be parallelised, for example by a recursive relationship.} Testing membership of a specific set of sets that require $\leq\aleph$
bits of information to define, \emph{r}, requires matching \emph{r}
against $2^{\aleph}$ disjoint sets of registers which contain one
set that requires $\leq\aleph$ bits of information to define, $s_{\alpha<2^{\aleph}}$,
which can be done in parallel with a finite program in $\aleph$ steps
as follows. Use \emph{r} and $s_{\alpha}$ from the input registers
and create a set of working registers, \emph{$D_{\alpha<2^{\aleph}}$},
with one register each, written $W_{\alpha}$, in $1$ step and with
a finite program writing 1 to each $W_{\alpha}$ in parallel. For
\emph{r} and each $s_{\alpha}$, for ordinal \emph{$\beta<\aleph$}
perform the operation $(r)_{\beta}\leftrightarrow(s_{\alpha})_{\beta}$\footnote{That is $((r)_{\beta}\wedge(s_{\alpha})_{\beta})\vee((\neg r)_{\beta}\wedge(\neg s_{\alpha})_{\beta})$.}
in parallel, which returns \emph{1} if $(r)_{\beta}=(s_{\alpha})_{\beta}$
and 0 otherwise; and if the result is \emph{0} write \emph{0} to $W_{\alpha}$
and then halt the program; otherwise write \emph{1} to $W_{\alpha}$
and then move right one register along\emph{ r} and $s_{\alpha}$
to $(r)_{\beta+1}$ and $(s_{\alpha})_{\beta+1}$. At limit ordinals
$\lambda$, proceed as normal by performing the operation $(r)_{\lambda}\leftrightarrow(s_{\alpha})_{\lambda}$.
\footnote{To implement the pseudo-code as a program, it is possible to use a
hypercomputer with three ordinary states, 2,3,4, an initial state,
\emph{1}, a halting state, 5, with the following instructions, assuming
that the program starts in state \emph{1}, that two sets that require
$\leq\aleph$ bits of information to define are for simplicity stored
in $W_{1,\alpha<\aleph,\gamma}$ and $W_{2,\alpha<\aleph,\gamma}$,
the result of bit-wise comparison of the sets that require $\leq\aleph$
bits of information to define is stored in $W_{3,1}$. A suitable
program is $\langle\gamma,1,W_{1},0,\langle2,3\rangle,4\rangle$,
$\langle\gamma,1,W_{1},1,\langle2,3\rangle,4\rangle$, $\langle\gamma,4,W_{1},0,\langle8,1\rangle,3\rangle$,
$\langle\gamma,4,W_{1},1,\langle8,1\rangle,2\rangle$, $\langle\gamma,3,W_{2},1,\langle1,3\rangle,5\rangle$,
$\langle\gamma,2,W_{2},0,\langle1,3\rangle,5\rangle$, $\langle\gamma,3,W_{2},0,\langle8,1\rangle,4\rangle$,
$\langle\gamma,2,W_{2},1,\langle8,1\rangle,4\rangle,\langle\gamma,1,W_{0},0,\langle12,0\rangle,1\rangle,\,\langle\gamma,1,W_{0},1,\langle12,0\rangle,1\rangle,\,\langle\gamma,1,W_{0},1,0,5\rangle$.
The reason that the state with the main loop is the highest ordinary
state is 4 is to allow the program to start in the main loop at limit
ordinals. It can be seen that the program will either halt in state
5 with output 0 or in state 4 with output 1 when the computation runs
to completion (\emph{i.e.} at step $o(\aleph)$).} \\
c) Each of a maximum of $2^{2^{^{\aleph}}}$ sets of sets that require
$\leq2^{\aleph}$ bits of information to define can be represented
as specific sets when computing the truth of first-order quantified
propositions involving such sets. Put more formally, since a parallel
\emph{$\langle2^{\aleph},\aleph,\aleph\rangle$}-hypercomputer can
compute the truth of a first-order quantified proposition with quantification
over sets of sets that require $\leq\aleph$ bits of information to
define with the addition of membership of specific sets that require
$\leq2^{\aleph}$ bits of information to define, if \emph{R(X)}, for
\emph{X} a set of sets that require $\leq\aleph$ bits of information
to define, is a formula of set theory with free variable \emph{X},
then $(\forall X)R(X)$ can be computed in parallel across $2^{2^{^{\aleph}}}$
disjoint sets of $2^{\aleph}$ registers by writing \emph{1 }to an
output register of the management program initially and then writing
\emph{0} if any of \emph{R(X)} is false; and for \emph{$(\exists X)R(X)$}
by writing \emph{0 }to an output register initially and then writing
\emph{1} if any of \emph{R(X)} is true. By induction on quantifier
complexity of a second-order predicate \emph{A(X)}, since the parallel
computation adds 2 steps and needs a finite program to implement\emph{
A(X) }on each parallel hypercomputer, it can be seen that a parallel
\emph{$\langle2^{2^{^{\aleph}}},\aleph,\aleph\rangle$}-hypercomputer
can compute the truth of second-order quantified propositions about
sets that require $\leq2^{\aleph}$ bits of information to define. 
\end{proof}
\begin{rem}
\label{rem:2.11} An ordinal hypercomputer is very powerful indeed;
\cite{Koepke2005,Koepke2009} show that, with a finite program and
a set of registers indexed by all bounded sets of ordinals, the class
of all ordinal computable sets of ordinals that can be computed from
finitely many ordinal parameters is Gödel's constructible set universe
\emph{L}. This result shows that with finite programs only sets of
ordinals definable by formulas in the language of set theory can be
computed using an ordinal hypercomputer. In general sets of size $\aleph$
will not be definable by a finite program, and we note that the construction
of a set of size $\aleph$ requires a serial $\langle\aleph,\aleph,\aleph\rangle$-hypercomputer
or a parallel \emph{$\langle\aleph,1,1\rangle$}-hypercomputer\footnote{This does not allow for any recursive relationships in the specification
of members of the set.}. We have seen that the class of all serial $\langle2^{2^{^{\aleph}}},2^{2^{^{\aleph}}},2^{2^{^{\aleph}}}\rangle$-hypercomputers
or parallel \emph{$\langle2^{2^{^{\aleph}}},\aleph,\aleph\rangle$}-hypercomputers
computes truth in the set theoretic universe \emph{V} for first-order
and second-order propositions of set theory with finitely many quantifiers.
If we allow a parallel hypercomputer to have $\beth\leq2^{\aleph}$
parallel hypercomputers chained together, then to compute a predicate
of length $\beth$ with $\beth$ quantifiers, the program will have
length $\beth$ and will have $\beth$ steps; hence a parallel $\langle2^{2^{^{\aleph}}},\beth,\beth\rangle$-hypercomputer
will suffice. But from the point of view of standard second-order
set theory with finitely long predicates and finitely many quantifiers,
the class of all parallel \emph{$\langle2^{2^{^{\aleph}}},\aleph,\aleph\rangle$-}hypercomputers
computes the set of all true propositions.
\end{rem}

\bibliographystyle{amsplain}
\bibliography{0_Users_andrewpowell_Downloads_hyperc1}

\end{document}